\documentclass[11pt]{article}

\usepackage[T1]{fontenc}
\usepackage{lmodern}
\usepackage{microtype}
\usepackage[a4paper,margin=30mm]{geometry}
\usepackage{amsmath,amssymb,amsthm,mathtools}
\usepackage{xcolor}
\usepackage{enumitem}
\usepackage[authoryear,round]{natbib}
\usepackage[hidelinks]{hyperref}

\definecolor{heading}{HTML}{17365D}
\definecolor{accent}{HTML}{2F6B8A}
\definecolor{soft}{HTML}{F2F6F8}

\hypersetup{
  pdftitle={Positivity of Smooth Currents on Singular Spaces},
  pdfsubject={Smooth positive currents, tangent cones, and a counterexample on a singular threefold}
}

\newtheoremstyle{clean}
  {8pt}{8pt}{\itshape}{}{{\color{heading}\bfseries}}{.}{0.5em}{}
\theoremstyle{clean}
\newtheorem{theorem}{Theorem}
\newtheorem{lemma}[theorem]{Lemma}
\newtheorem{proposition}[theorem]{Proposition}
\newtheorem{definition}[theorem]{Definition}

\newtheorem{corollary}[theorem]{Corollary}

\newcommand{\C}{\mathbb C}
\newcommand{\Pj}{\mathbf P}
\newcommand{\ddc}{dd^{c}}
\newcommand{\reg}{\mathrm{reg}}
\newcommand{\HS}{\mathrm{HS}}
\newcommand{\tr}{\operatorname{tr}}
\newcommand{\rank}{\operatorname{rank}}

\newcommand{\PSH}{\operatorname{PSH}}
\providecommand{\subject}[1]{\textbf{\textit{Mathematics Subject Classification 2020:}} #1}

\title{Positivity of Smooth Currents on Singular Spaces}
\author{Duc-Thai Do \and Duc-Viet Vu}
\newcommand{\Addresses}{{
		\bigskip
		\footnotesize
		\noindent
			\textsc{Duc-Thai Do, Department of Mathematics, Hanoi National University of Education, 136 XuanThuy str., Hanoi, Vietnam.}
		\noindent
		\par\nopagebreak
		\noindent
		\textit{E-mail address}: \texttt{doducthai@hnue.edu.vn}	
\newline
\vskip0.3cm
	\noindent
			\textsc{Duc-Viet Vu, University of Cologne, Division of Mathematics, Department of Mathematics and Computer Science, Weyertal 86-90, 50931, K\"oln,  Germany.}
		\noindent
		\par\nopagebreak
		\noindent
		\textit{E-mail address}: \texttt{dvu@uni-koeln.de}

}}
\date{}

\begin{document}

\maketitle
\thispagestyle{plain}

\begin{abstract} 
We study two notions of positivity for smooth currents on singular spaces. We show that they are not equivalent by establishing a necessary condition on the fourth Whitney cones. We also construct an explicit compact normal projective variety $X$ and a real smooth $(1,1)$-form $T$ on $X$ such that $T$ has local smooth $\ddc$-potentials and is a K\"ahler current, but $T$ is not a positive form (hence not Hermitian). 
\end{abstract}

\medskip
\noindent\textbf{Keywords.}
Singular complex spaces; smooth positive currents; K\"ahler currents;
Hermitian forms; tangent cones; Whitney cones; analytic tangent spaces.

\noindent
\subject{32U15}, {32Q15}, {53C55}.

\section{Introduction}\label{sec:introduction}

On a singular complex space, there are two natural positivity tests for a smooth $(1,1)$-form. The first one is to consider the form as a current and test whether the form is a positive current. The second one is the positivity inherited from local embeddings (see Section \ref{sec:prelim} for details). In the smooth setting, the two notions are the same. The purpose of this note is to study these notions in the singular setting and give an explicit example showing that they are different in general.

We use the following terminology.  A \emph{Hermitian metric} (resp. \emph{positive form}) on a complex space $X$ is a smooth real $(1,1)$-form which, under local embeddings of $X$ into a complex manifold, is the restriction of a strictly positive smooth $(1,1)$-form (resp. a positive form).    A \emph{K\"ahler
current on $X$} is a closed positive $(1,1)$-current $S$ satisfying
\[
  S\geq \delta\omega
\]
for some Hermitian metric $\omega$ on $X$ and some constant $\delta>0$.

Let $X$ be a complex space and $x\in X$. A smooth form on $X$ can be viewed as a current on $X.$ When we want to emphasize this point of view, we will refer to smooth forms as smooth currents.  Let $T_x X$ be the tangent space of $X$ at $x$ (see Section \ref{subsec:tangent-cones} for its definition). Let $\theta$ be a real smooth $(1,1)$-form  on $X$. For any subset $E\subset T_xX$, we say that $\theta$ is \emph{E-positive at $x$} if for every local smooth extension $\theta'$ of $\theta$ under a local embedding of $(X,x)$, we have $\theta'(x)(v, \overline v) \ge 0$ for every $v \in E$. Let $C_4(X,x)$ be the fourth Whitney cone of $X$ at $x$ (see Definition \ref{def:fourth-whitney-cone} below). We recall that $C_4(X,x)$ is an algebraic subset of $T_xX$ and if $x$ is a regular point, then $C_4(X,x)=T_x X$.  We will prove later that for a pure-dimensional complex space $X$, the form $\theta$ is positive as a current on $X$ if and only if it is $C_4(X,x)$-
positive at $x$ for every $x\in X$ (see Lemma \ref{le-tangentialpositive} below).  Our first main result is the following.

\begin{theorem}\label{thm-tangentialpositive} Let $X$ be a complex space. Let $\theta$ be a smooth real $(1,1)$-form on $X$.  Let $x_0\in X$. Assume that $C_4(X,x_0)$ has an irreducible component which is not contained in any complex hyperplane of $T_{x_0}X$, and there exists a local smooth extension $\theta'$ of $\theta$  under a local embedding of $(X,x_0)$ such that  $\theta'(x_0)|_{T_{x_0}X}$ is not semipositive. Then, $\theta$ is not positive as a smooth form.
\end{theorem}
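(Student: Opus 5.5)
Suppose, for contradiction, that $\theta$ is positive as a smooth form. Then, near $x_0$, there is a local embedding $(X,x_0)\hookrightarrow (\C^N,0)$ and a smooth positive $(1,1)$-form $\eta$ on a neighbourhood of $0$ with $\eta|_X=\theta$. We will show that every smooth extension $\theta'$ of $\theta$, under any local embedding, then satisfies $\theta'(x_0)|_{T_{x_0}X}\ge 0$. This contradicts the hypothesis.

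\emph{Step 1: the value on $T_{x_0}X$ does not depend on the extension.} First we reduce to a single embedding. Any two local embeddings differ by a local biholomorphism of ambient spaces, possibly after adding trivial factors. Since the Zariski tangent space $T_{x_0}X$ is intrinsic, and pullback of $(1,1)$-forms is compatible with these maps, we may work in a fixed minimal embedding, where $T_{x_0}X=\C^N$. Take two extensions $\theta',\eta$ of $\theta$. Their difference $\alpha=\theta'-\eta$ is a smooth real $(1,1)$-form whose restriction to $X$ vanishes. For a singular space, "restriction vanishes" is usually defined so that $\alpha$ lies in the ideal generated by the ideal sheaf $I_X$, the conjugate $\overline{I_X}$, and their differentials. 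Concretely,
\[
\alpha=\sum f_j\beta_j+\sum \bar f_j\gamma_j+\sum \partial f_j\wedge\mu_j+\sum \bar\partial \bar f_j\wedge \nu_j+\text{conjugates},\qquad f_j\in I_X .
\]
At $x_0$ every $f_j$ vanishes. In a minimal embedding every $f_j$ also vanishes to order at least $2$, so $df_j(x_0)=0$. Hence $\alpha(x_0)=0$.

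This is the step where I expect the main subtlety. One must check that the paper's notion of the restriction of a form to $X$ (from Section~\ref{sec:prelim}) really is this ideal description, and not merely pullback to $X_{\reg}$. If only the pullback to $X_{\reg}$ vanishes, the argument has to be run on cones instead. For $v\in C_4(X,x_0)$ we have $\alpha(x_0)(v,\bar v)=0$, obtained by taking limits of tangent vectors to $X_{\reg}$ at nearby points, where $\alpha$ vanishes on $T X_{\reg}$. This is exactly the role of the Whitney cone $C_4$.

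\emph{Step 2: use the cone hypothesis.} In the weaker setting, positivity of $\eta$ gives $\theta'(x_0)(v,\bar v)=\eta(x_0)(v,\bar v)\ge 0$ for every $v\in C_4(X,x_0)$. This is consistent with Lemma~\ref{le-tangentialpositive}. So for every extension $\theta'$, the Hermitian form $h=\theta'(x_0)|_{T_{x_0}X}$ agrees on $C_4(X,x_0)$ with the positive form $\eta(x_0)|_{T_{x_0}X}$. Let $C$ be an irreducible component of $C_4(X,x_0)$ not contained in any complex hyperplane. The map $v\mapsto (h-\eta(x_0))(v,\bar v)$ is a real quadratic form in $(v,\bar v)$ vanishing on $C$. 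Polarizing, $\sum a_{ij}v_i\bar w_j$ vanishes on $C\times \bar C$. This holds because the function is holomorphic in $v$ and antiholomorphic in $w$, and it vanishes on the diagonal of the irreducible totally real-like set $\{(v,\bar v)\}$ of $C\times\bar C$, which is a uniqueness set for such functions on the irreducible $C\times\bar C$.

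Since $C$ spans $T_{x_0}X$, the bilinear form $h-\eta(x_0)$ vanishes identically. Therefore $h=\eta(x_0)|_{T_{x_0}X}\ge 0$, which contradicts the choice of $\theta'$.
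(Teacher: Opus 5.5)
Your argument is correct, and its cone branch is essentially the paper's proof. The paper likewise shows that the two Hermitian forms agree on $C_4(X,x_0)$ by passing to limits of tangent vectors of $X_{\reg}$, and then applies the polarization and maximal-total-reality uniqueness argument on the regular part of an irreducible spanning component (Lemma~\ref{lem:hermitian-uniqueness-cone}); the only cosmetic difference is that it compares $\eta$ and $\theta'$ in the product embedding $(\jmath_1,\jmath_2)$ rather than transporting both to a minimal embedding. Note that the paper's notion of restriction is pullback to $X_{\reg}$, so your ideal-theoretic shortcut in Step~1 is not available: two extensions can genuinely differ on $T_{x_0}X$ when the cone hypothesis fails, and the cone branch is the argument actually needed.
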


The above result is a first sign showing the non-equivalence between the positivity as currents and as forms for smooth currents because the cone $C_4(X,x)$ is in general a proper subset of $T_xX$ for a singular point $x \in X$. The second main result is an explicit sufficient condition for the above-mentioned phenomenon. 

\begin{theorem}\label{thm-tangentialpositive2} Let $X$ be a compact pure-dimensional Kähler space. Let $x_0\in X$. Assume that the following conditions hold:

(i) $\dim C_4(X, x_0) < \dim T_{x_0}X$,

(ii) $C_4(X,x_0)$ has an irreducible component which is not contained in any complex hyperplane of $T_{x_0}X$. 

Then there exists a real closed smooth $(1,1)$-form $\theta$ having smooth local potentials on $X$ such that $\theta$ is a Kähler current on $X$ but is not positive as a smooth form. In particular, such a $\theta$ is not a Kähler form.  %there exists a local smooth extension $\theta'$ of $\theta$  under a local embedding of $(X,x_0)$ such that  $\theta'(x_0)|_{T_{x_0}X}$ is not definite semi-positive, then $\theta$ is not positive as a smooth form.
\end{theorem}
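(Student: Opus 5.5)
Take a Kähler form $\omega_X$ on $X$ and perturb it near $x_0$ by $\ddc$ of a smooth function vanishing to second order at $x_0$. The perturbation should be negative in some direction of $T_{x_0}X$ but nonnegative on $C_4(X,x_0)$. Positivity as a current will then come from Lemma \ref{le-tangentialpositive}, and non-positivity as a form from Theorem \ref{thm-tangentialpositive}.

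\textbf{Choosing the quadratic form.} Fix a local embedding $(X,x_0)\hookrightarrow(\C^N,0)$ with $T_{x_0}X\subset\C^N$ a linear subspace. By (i), $C_4(X,x_0)$ is a proper algebraic cone in $T_{x_0}X$, so there is a direction $v_0\in T_{x_0}X\setminus C_4(X,x_0)$. Since $C_4$ is a closed cone, its projectivization is compact in $\Pj(T_{x_0}X)$, and $[v_0]$ lies at positive distance from it. Choose a Hermitian form $h$ on $\C^N$ with the following properties:
\begin{itemize}[nosep]
\item $h$ is nonnegative on the closed set of directions at angle at least $\varepsilon$ from $[v_0]$, where $\varepsilon$ is chosen so that this set contains $C_4(X,x_0)$;
\item $h(v_0,\overline{v_0})<0$.
\end{itemize}
For instance, take $h(v,\overline v)=|v|^2-A|\langle v,v_0\rangle|^2$ with $|v_0|=1$ and $1<A<1/\cos^2\varepsilon$. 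Set $u(z)=h(z,\overline z)$, so that $\ddc u$ is the constant form $h$. Let $\chi$ be a cutoff equal to $1$ near $0$. On $X$ define
\[
\theta=C\omega_X+\ddc(\chi u)|_X .
\]
This form is closed, smooth, and has smooth local potentials. We will take $\omega_X$ to be $|dz|^2$ near $x_0$ (rescaling it if needed), and choose $C>0$ freely.

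\textbf{Making $\theta$ a Kähler current.} The main obstacle is showing $\theta\ge\delta\omega_X$ as currents, since $h$ is indefinite on $T_{x_0}X$. Outside a small neighbourhood $U$ of $x_0$, taking $C$ large gives $\theta\ge\omega_X$. On $U$, use the characterization of positivity via $C_4$ that underlies Lemma \ref{le-tangentialpositive}: for $x\in X_{\reg}$ near $x_0$, every limit of tangent spaces $T_xX_{\reg}$ as $x\to x_0$ lies in $C_4(X,x_0)$ (this is the defining property of the fourth Whitney cone). Hence, by compactness, for $x\in X_{\reg}\cap U$ with $U$ small, every unit vector of $T_xX_{\reg}$ is within angle $\varepsilon/2$ of $C_4(X,x_0)$. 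On such vectors $h\ge c|v|^2$ for some $c>0$, after slightly enlarging $\varepsilon$ in the choice of $A$. Thus on $X_{\reg}\cap U$ we get $(\ddc u)|_{X_{\reg}}\ge c\,\omega_X$. Since $X$ is pure dimensional and $\theta$ is smooth, a current inequality on $X_{\reg}$ extends to $X$, as $X_{\mathrm{sing}}$ carries no mass of a smooth form. On the annulus where $\chi$ varies, $X$ stays away from $x_0$, but there the form $\ddc(\chi u)$ is merely bounded, so $C$ large absorbs it. Hence
\[
\theta\ge \min(c,1)\,\omega_X .
\]
The delicate point is the claim that, near $x_0$, the tangent spaces of the regular part accumulate only inside $C_4(X,x_0)$ uniformly. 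I would verify this directly from Definition \ref{def:fourth-whitney-cone} by a contradiction argument on sequences.

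\textbf{Non-positivity.} Near $x_0$ the extension $\theta'=C|dz|^2+h$ is itself a local smooth extension, and it is not semipositive on $T_{x_0}X$ provided $h(v_0,\overline{v_0})<-C$. Since $C$ was fixed before $h$, rescale instead: replace $h$ by $\lambda h$ with $\lambda\gg1$. This keeps the positivity on $C_4$-directions (the lower bound becomes $\lambda c\ge c$) and makes $\theta'(x_0)(v_0,\overline{v_0})<0$. The bound on the annulus changes with $\lambda$, so the order of choices is: first $\varepsilon$ and $A$, then $U$, then $\lambda$, then $C$. To avoid circularity, rescale $\omega_X$ only on the annulus by taking $C$ large there. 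Concretely, replace $C\omega_X$ by $\ddc$ of a global Kähler potential correction that is large away from $U$ and equals $|z|^2$ near $x_0$; alternatively, note that $C$ only needs to dominate on $X\setminus\{\chi=1\}$, so use $\omega_X+C\ddc((1-\chi')\psi)$-type gluing. Condition (ii) now gives, via Theorem \ref{thm-tangentialpositive}, that $\theta$ is not positive as a form, and hence not Kähler.
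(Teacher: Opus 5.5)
\paragraph{Verdict.} Your architecture is the paper's, but the gluing step has a genuine gap: you flag the circularity yourself and do not resolve it.

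\paragraph{What matches.} Your $h=\lvert v\rvert^2-A\lvert\langle v,v_0\rangle\rvert^2$ is exactly the Hermitian separation lemma. The uniform positivity of $h$ on $T_yX_{\reg}$ for $y$ near $x_0$, by a sequential argument, is the paper's. So are positivity as a current via the regular locus, and non-positivity via Theorem~\ref{thm-tangentialpositive}.

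\paragraph{Why the cutoff fails.} Take $\theta=C\omega_X+\ddc(\chi u)$ with an ordinary cutoff $\chi=\eta(\lVert z\rVert/r)$.
\begin{itemize}
\item On the transition annulus, the error terms $u\,\ddc\chi+d\chi\wedge d^cu+du\wedge d^c\chi$ are bounded by $K\lVert h\rVert$. Here $K$ depends only on the profile $\eta$, not on $r$: the factors $u=O(\lVert h\rVert\lVert z\rVert^2)$ and $du=O(\lVert h\rVert\lVert z\rVert)$ exactly compensate $\ddc\chi=O(r^{-2})$ and $d\chi=O(r^{-1})$.
\item Absorbing these errors with $C\omega_X$ forces $C\gtrsim K\lVert h\rVert$.
\item Failure of semipositivity at $x_0$ for the extension $C\ddc\rho+h$ forces $C\,(\ddc\rho)_0(v_0,\overline{v_0})<A-1$.
\item Replacing $h$ by $\lambda h$ multiplies both requirements by $\lambda$. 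So no ordering of the choices of $\varepsilon,A,U,\lambda,C$ helps. You are left with a $\lambda$-independent condition of the form $K\lVert h\rVert\lesssim A-1$.
\item That condition need not hold for a fixed profile. $A-1$ is forced to be small when every direction of $T_{x_0}X$ lies close to $C_4(X,x_0)$, since $A<1/\cos^2\varepsilon$.
\end{itemize}
Your two repairs are only gestures. A K\"ahler form equal to $\ddc\lVert z\rVert^2$ near $x_0$ and large on an annulus does exist, but building it is itself a gluing problem of the same kind. An $\omega_X+C\ddc((1-\chi')\psi)$ correction carries the same scale-invariant cutoff errors.

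\paragraph{What is missing.} You need a gluing mechanism with no error terms, or with arbitrarily small ones.
\begin{itemize}
\item \emph{The paper's route.} It notes that $q|_X$ (your $u|_X$) is plurisubharmonic near $x_0$, by the $C_4$-positivity and Forn\ae ss--Narasimhan. It adds pluriharmonic quadratics to $\rho$ so that $a\lVert z\rVert^2\le\rho-\lambda q\le b\lVert z\rVert^2$. It then takes a regularized maximum $\psi=M_\varepsilon(\lambda q+\kappa,\rho)$. Now $\ddc\psi$ is a convex combination of $\lambda\ddc q$ and $\ddc\rho$ plus a semipositive term, so there are no error terms. Near $x_0$ one has $\theta=\lambda\ddc q$ with no multiple of $\omega$ added, so non-semipositivity at $x_0$ is automatic.
\item \emph{Rescuing your scheme.} Use a logarithmic cutoff $\chi=f(\log\lVert z\rVert)$, where $f$ decreases from $1$ to $0$ over an interval of length $L$. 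Place this interval inside the region where the tangent spaces of $X_{\reg}$ are close to $C_4(X,x_0)$, so that $\chi\,h\ge0$ there. The error terms are then $O(\lVert h\rVert/L)$. You may fix any small $C>0$ with $C\,(\ddc\rho)_0(v_0,\overline{v_0})<A-1$, and afterwards take $L$ large.
\end{itemize}

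\paragraph{Minor point.} You cannot simply declare $\omega_X=\lvert dz\rvert^2$ near $x_0$. You do not need to: comparability of $\omega_X$ with the Euclidean form there suffices.
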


The  idea  of the proof of Theorem \ref{thm-tangentialpositive2} is as follows. Using the  strict inclusion of $C_4(X,x)$ in $T_xX$, we will construct a local constant Hermitian form $H$ in a local embedding of  $(X,x)$ such that $H(x)$ is an indefinite Hermitian form on $T_xX$ but it is  positive definite along $C_4(X,x)$ in the Euclidean space. Because of the hypothesis on $C_4(X,x)$, we will show that  every other Hermitian form, which is equal to $H$ on that cone, cannot be positive definite on the whole space. This treatment explains the existence of an infinitesimal example. To have a global example, we use a standard cut-off process. %consider a concrete example but the previously mentioned discussion explain the underlying picture.    
In Section \ref{sec:compact-model}, we will present a concrete example of a projective normal space $X$ satisfying the assumption of Theorem \ref{thm-tangentialpositive2} (see Corollary \ref{cor:compact-projective-example} there). 

In Section~\ref{sec:prelim}, we recall notions needed for the construction:  forms, currents, functions on singular spaces
and quotient spaces. Sections \ref{sec-cone-dauam} and \ref{subsec:positivity-regular-locus} are devoted to a proof of Theorems \ref{thm-tangentialpositive} and \ref{thm-tangentialpositive2}. 
\\

\noindent
\textbf{Acknowledgment.} The research of Duc-Thai Do is supported by the ministry-level project B2025-SPH-02. The research of Duc-Viet Vu is partially funded by the DFG-Project VU 126/1-2.  
An initial raw example presented in  Corollary \ref{cor:compact-projective-example} was found by ChatGPT and subsequently refined/expanded by the authors.

\section{Preliminaries}\label{sec:prelim}

All complex spaces in this note are reduced.  General references are \citet[Chapters~1--2]{Chirka1989} and \cite{Demailly1985}.

\subsection{Forms, currents, and plurisubharmonic functions on singular spaces}
\label{subsec:currents-on-spaces}

We recall the notions of smooth forms, currents on singular spaces.  The basic reference is
\citet[\S1]{Demailly1985}; see also
\citet[Chapters~II and III]{Demailly2012}.  The same conventions are used in
recent work on singular K\"ahler spaces, for example
\citet[Definition~2.28 and \S2.4.1]{DasHaconPaun2024} and
\citet[\S4.1]{HaconPaun2024}.

Let $X$ be a complex space of dimension $n$.  Locally, one may choose a holomorphic
embedding
\[
  \jmath:U\hookrightarrow\Omega\subset\C^N
\]
whose image is a closed analytic subset of the open set $\Omega$.  A smooth
$(a,b)$-form on $U$ is, by definition, the restriction $\jmath^*\alpha$ of a
smooth $(a,b)$-form $\alpha$ on $\Omega$.  Thus the space of smooth forms on
$U$ is the image of the restriction map
\[
  \mathcal A^{a,b}(\Omega)\longrightarrow \mathcal A^{a,b}(U),
  \qquad \alpha\longmapsto\jmath^*\alpha,
\]
equipped with the quotient topology.  This definition does not depend on the
chosen embedding.  The operators
$d,\partial,\bar\partial$ descend from the ambient space.
%Assume for the moment that $X$ has pure dimension $n$, and 
Let
$\mathcal D^{a,b}(X)$ denote compactly supported smooth $(a,b)$-forms with
their usual topology.  A current of \emph{bidimension}
$(q,r)$ is a continuous linear functional on $\mathcal D^{q,r}(X)$.  The same
current is said to have \emph{bidegree} $(n-q,n-r)$. 
The differential of a current is defined by duality, with the usual sign.
A current is \emph{closed} if $dT=0$; for a current of pure type this is
equivalent to $\partial T=\bar\partial T=0$.

For a local embedding $\jmath:U\hookrightarrow\Omega$, the direct image
\[
  \langle \jmath_*T,\eta\rangle
  :=\langle T,\jmath^*\eta\rangle
\]
identifies currents on $U$ with a distinguished subspace of ambient currents
supported on $\jmath(U)$.  It is injective and commutes with
$d,\partial,\bar\partial$. 

A smooth $(k,k)$-form is \emph{(strongly) positive} if it is locally the restriction of (strongly) positive forms on ambient spaces under local embeddings.   
A current $T$ of bidegree
$(1,1)$ on $X$ is \emph{positive}, written $T\geq0$,
if
\[
  \langle T,\Phi\rangle\geq 0
\]
for every compactly supported positive $(n-1,n-1)$-form $\Phi$.
Equivalently, in every local embedding, $\jmath_*T$ is a positive ambient
current.  Positive currents have order zero: their coefficient distributions
are complex measures.

A function $u:X\to[-\infty,+\infty)$, not identically $-\infty$ on any open
set, is \emph{plurisubharmonic}, or psh, if it is locally the restriction of
an ambient psh function: for every local embedding
$\jmath:U\hookrightarrow\Omega$, after shrinking $U$ if necessary, there is
$\widetilde u\in\PSH(\Omega)$ such that
$u=\widetilde u|_U$.  The theorem of \citet[Theorem~5.3.1]{FornaessNarasimhan1980}
says equivalently that $u$ is upper semicontinuous and $u\circ f$ is
subharmonic or identically $-\infty$ for every holomorphic disc
$f:\mathbb D\to X$; see also \citet[Definition~1.5 and
Theorem~1.6]{Demailly1985}. A closed positive $(1,1)$-current $T$ on $X$ is said to have \emph{local potentials} if locally $T= \ddc u$ for some local psh function $u$ on $X$ (recall that $d^c:=\frac{i}{2 \pi} (\bar \partial -\partial)$).

A \emph{Kähler form} on \(X\) is a smooth real \((1,1)\)-form \(\omega\) which locally admits smooth strictly plurisubharmonic potentials, i.e. locally \(\omega=dd^c\rho\) with \(\rho\) smooth and strictly plurisubharmonic. A complex space admitting a Kähler form is called a Kähler space.

\subsection{Tangent cones}
\label{subsec:tangent-cones}

We start with some definitions about cones. 
\begin{definition}
A nonempty subset \(C\subset \mathbb C^n\) is called a
\emph{complex cone} if
\[
  \lambda z\in C
  \qquad
  \text{for every } z\in C
  \text{ and every } \lambda\in\mathbb C.
\]
Equivalently, \(0\in C\) and
\[
  \lambda C=C
  \qquad
  \text{for every } \lambda\in\mathbb C^*.
\]
The cone \(C\) is called \emph{proper} if
\[
  C\neq \mathbb C^n.
\]
\end{definition}
We discuss now several notions of tangent cones for singular spaces.
Let $X$
be a reduced complex space, let $x\in X$, and choose a local embedding
\[
  \jmath:(X,x)\hookrightarrow(\C^N,0).
\]
All cones below are written in these coordinates.  Their transformation
under biholomorphic changes of embedded germs is governed by the differential
at the base point; see \citet[Chapter~2, \S 8.1--8.2 and
9.1--9.2]{Chirka1989}.
We define the \emph{tangent space} of $X$ at $x$ by
\[
  T_xX:=\left(\frac{\mathfrak m_{X,x}}
                    {\mathfrak m_{X,x}^{2}}\right)^{\!*}.
\]
Denote by $\mathcal I_{X,0}\subset\mathcal O_{\C^N,0}$ the ideal of
holomorphic germs vanishing on the embedded germ $\jmath(X,x)$.

\begin{lemma}
\label{lem:embedded-zariski-tangent}
The differential of $\jmath$ identifies $T_xX$ canonically with
\begin{equation}
\label{eq:embedded-zariski-tangent}
  \bigl\{v\in\C^N:df_0(v)=0
  \text{ for every }f\in\mathcal I_{X,0}\bigr\}.
\end{equation}
\end{lemma}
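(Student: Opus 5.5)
The plan is to identify the maximal ideal of $\mathcal O_{X,x}$ with a quotient of the ambient one, and then dualize. Since $\jmath$ is a closed embedding, $\mathcal O_{X,x}\cong \mathcal O_{\C^N,0}/\mathcal I_{X,0}$. Writing $\mathfrak m:=\mathfrak m_{\C^N,0}$, this gives $\mathfrak m_{X,x}=\mathfrak m/\mathcal I_{X,0}$. Note that $\mathcal I_{X,0}\subset\mathfrak m$ because $0\in\jmath(X)$. Consequently
\[
  \frac{\mathfrak m_{X,x}}{\mathfrak m_{X,x}^2}
  \;\cong\;
  \frac{\mathfrak m}{\mathfrak m^2+\mathcal I_{X,0}},
\]
since the square of $\mathfrak m/\mathcal I_{X,0}$ is $(\mathfrak m^2+\mathcal I_{X,0})/\mathcal I_{X,0}$. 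This is a quotient of the vector space $\mathfrak m/\mathfrak m^2$ by the image $W$ of $\mathcal I_{X,0}$.

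Next I would use the standard identification $\mathfrak m/\mathfrak m^2\cong (\C^N)^*$, given by $f\mapsto df_0$. It is well defined because elements of $\mathfrak m^2$ have vanishing differential at $0$. It is an isomorphism because the classes of $z_1,\dots,z_N$ form a basis: by Taylor expansion, $f-\sum_j \partial_j f(0)z_j\in\mathfrak m^2$ for every $f\in\mathfrak m$. Under this map, $W$ corresponds to the subspace $\{df_0: f\in\mathcal I_{X,0}\}\subset(\C^N)^*$.

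Dualizing, and using that the dual of a quotient $V^*/W$ is the annihilator $W^\perp\subset V$ (here $V=\C^N$), we get
\[
  T_xX=\Bigl(\frac{(\C^N)^*}{W}\Bigr)^{*}
  =\{v\in\C^N:\ df_0(v)=0 \ \text{for all } f\in\mathcal I_{X,0}\}.
\]
This is exactly \eqref{eq:embedded-zariski-tangent}. The composite map is the transpose of the pullback $\jmath^*:\mathfrak m/\mathfrak m^2\to\mathfrak m_{X,x}/\mathfrak m_{X,x}^2$, which is by definition the differential of $\jmath$. A derivation $\delta\in T_xX$ is sent to $v=(\delta(z_1),\dots,\delta(z_N))$.

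For canonicity, the identification is defined intrinsically as the transpose of the surjection $\jmath^*$, which is injective on duals. Under a biholomorphic change $\phi$ of the ambient germ, the chain rule shows that the subspaces transform by $d\phi_0$, compatibly with the differentials. The only point requiring any care is the reducedness or ideal issue: one needs $\mathcal O_{X,x}=\mathcal O_{\C^N,0}/\mathcal I_{X,0}$ with $\mathcal I_{X,0}$ the full vanishing ideal. This is the definition for reduced spaces, and by the Nullstellensatz it is the structure ideal of the reduced embedded germ. Otherwise the argument is routine linear algebra.
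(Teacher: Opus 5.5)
Your proof is correct and is essentially the paper's argument. The paper checks by hand that $v\mapsto\lambda_v$, with $\lambda_v([g])=d\widetilde g_0(v)$, is well defined, injective, and surjective, recovering $v$ as $(\lambda([\zeta_1]),\dots,\lambda([\zeta_N]))$; this is exactly the elementwise content of your identification $\mathfrak m_{X,x}/\mathfrak m_{X,x}^2\cong\mathfrak m/(\mathfrak m^2+\mathcal I_{X,0})\cong(\C^N)^*/W$ followed by $(V^*/W)^*=W^\perp$, and your description as the transpose of $\jmath^*$ makes the word ``canonically'' more transparent than the paper's computation does.
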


\begin{proof}
Let $z_1,\ldots,z_N$ be the standard coordinates on $\C^N$, and write
$\zeta_j:=z_j|_X$ for their restrictions to the embedded germ.  Let
$K$ denote the vector space in \eqref{eq:embedded-zariski-tangent}.  For
$v\in K$ and $g\in\mathfrak m_{X,x}$, choose a holomorphic ambient extension
$\widetilde g\in\mathfrak m_{\C^N,0}$ and set
\begin{equation}
\label{eq:lambda-v-definition}
  \lambda_v([g]):=d\widetilde g_0(v),
  \qquad [g]\in\mathfrak m_{X,x}/\mathfrak m_{X,x}^2.
\end{equation}
This is well defined.  Indeed, two ambient extensions differ by a germ in
$\mathcal I_{X,0}$, whose differential annihilates $v$ by the definition of
$K$.  Moreover, if $g\in\mathfrak m_{X,x}^2$, write locally
$g=\sum_\nu g_\nu h_\nu$ with $g_\nu(x)=h_\nu(x)=0$, and choose ambient
extensions $\widetilde g_\nu,\widetilde h_\nu$ vanishing at the origin.  The
ambient germ $\sum_\nu\widetilde g_\nu\widetilde h_\nu$ extends $g$ and
has zero differential at the origin.  Any other ambient extension differs
from it by an element of $\mathcal I_{X,0}$, whose differential annihilates
$v$.  Thus \eqref{eq:lambda-v-definition} depends only on the class of $g$
modulo $\mathfrak m_{X,x}^2$ and defines an element of $T_xX$.
The resulting map $K\to T_xX$, $v\mapsto\lambda_v$, is injective: if
$\lambda_v=0$, then
\[
  v_j=(dz_j)_0(v)=\lambda_v([\zeta_j])=0,
  \qquad 1\leq j\leq N,
\]
so $v=0$.
Conversely, let $\lambda\in T_xX$ and define
\[
  v:=\bigl(\lambda([\zeta_1]),\ldots,
           \lambda([\zeta_N])\bigr)\in\C^N.
\]
If $f\in\mathcal I_{X,0}$, then the linear part of $f$ gives
\[
  df_0(v)
  =\lambda\!\left(\left[f|_X\right]\right)=0,
\]
because $f|_X=0$.  Hence $v\in K$.  For an arbitrary
$g\in\mathfrak m_{X,x}$, the class of an ambient extension $\widetilde g$
modulo the square of the maximal ideal is its linear part, and therefore
\[
  \lambda_v([g])=d\widetilde g_0(v)=\lambda([g]).
\]
Thus $K\to T_xX$ is surjective as well.  
\end{proof}

\begin{definition}[Ordinary tangent cone and analytic tangent space]
\label{def:ordinary-tangent-cone}
The \emph{ordinary tangent cone} of $X$ at $x$ is
\begin{equation}
\label{eq:sequential-tangent-cone}
  C_0(X,x):=\left\{v\in\C^N:
  \begin{array}{l}
  \text{there are }x_\nu\in X,\ x_\nu\to x,\text{ and }t_\nu>0\\[-2pt]
  \text{such that }t_\nu\jmath(x_\nu)\to v
  \end{array}\right\}.
\end{equation}
Equivalently, let $\mathcal I_{X,x}\subset\mathcal O_{\C^N,0}$ be the ideal
of germs vanishing on the embedded germ.  If
\[
  f=f_m+f_{m+1}+\cdots,
  \qquad f_m\neq0,
\]
is the Taylor expansion into homogeneous polynomials, write
$\operatorname{in}(f):=f_m$.  Then
\begin{equation}
\label{eq:initial-form-tangent-cone}
  C_0(X,x)=\bigl\{v\in\C^N:
  \operatorname{in}(f)(v)=0\text{ for every }
  f\in\mathcal I_{X,x}\bigr\}.
\end{equation}
Thus $C_0(X,x)\subset T_xX$ (here we identify $T_xX$ with a subspace of $\C^N$), but the ordinary tangent cone need not be linear.
\end{definition}

For an analytic germ of pure dimension $p$, the ordinary tangent cone is a complex algebraic cone of pure dimension $p$.  Moreover, $x$ is regular if
and only if $T_xX$ has dimension $p$; hence at every singular point
\begin{equation}
\label{eq:tangent-space-dimension-jump}
  \dim_{\C}T_xX>p.
\end{equation}
These facts, together with the equivalence of
\eqref{eq:sequential-tangent-cone} and
\eqref{eq:initial-form-tangent-cone}, are proved analytically in
\citet[Chapter~2, \S\S8.1 and 8.4]{Chirka1989}.

The cone relevant to limits of tangent \emph{spaces} along the regular locus
is larger than the ordinary tangent cone.

\begin{definition}[Fourth Whitney tangent cone]
\label{def:fourth-whitney-cone}
The \emph{fourth Whitney tangent cone} at
$x$ of $X$ is
\begin{equation}
\label{eq:C4-definition}
  C_4(X,x):=\left\{v\in\C^N:
  \begin{array}{l}
  \text{there are }x_\nu\in X_{\reg},\ x_\nu\to x,\text{ and}\\[-2pt]
  v_\nu\in T_{x_\nu}X_{\reg}\text{ with }v_\nu\to v
  \end{array}\right\}.
\end{equation}
It is a closed complex cone;  see
\citet[Chapter~2, \S\S9.1--9.2]{Chirka1989}.
\end{definition}
 Let \begin{equation}
\label{eq:local-embedding-dimension}
  d(x):=\dim_{\C}\frac{\mathfrak m_{X,x}}
                         {\mathfrak m_{X,x}^{2}}.
\end{equation}
Observe that $C_4(X,x)$ is an algebraic subset of $T_xX$ (see \cite{Chirka1989}). The above constructions of $C_0(X,x)$ and $C_4(X,x)$ depend on a local embedding. However, these cones are intrinsic subsets of $T_xX$ by the existence of the minimal local embedding in the following result. 

\begin{theorem}[{\citet[Page~333]{Grauert1962} or \citet{Narasimhan1962}}]
\label{thm:grauert-local-extension}
Let $x$ be a point of a reduced complex space $X$. Then the following assertions hold:

(i) There exist a neighborhood $U$ of $x$, an open set
  $G\subset\C^{d(x)}$, an analytic subset $A\subset G$, and a
  biholomorphism
  \[
    \tau:U\longrightarrow A.
  \]

 (ii) Let $(U,\tau,A)$ be any such chart, write $z:=\tau(x)$, and let
  \[
    \psi:U\longrightarrow\C^n
  \]
  be a local embedding at $x$.  Then there are an open neighborhood
  $V\subset G$ of $z$ and a holomorphic embedding
  \[
    \widetilde\psi:V\longrightarrow\C^n
  \]
  such that, with $W:=\tau^{-1}(A\cap V)$,
  \begin{equation}
  \label{eq:grauert-extension-identity}
    \widetilde\psi\circ\tau=\psi
    \qquad\text{on }W.
  \end{equation}
\end{theorem}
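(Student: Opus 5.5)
We prove the two assertions in turn. Throughout we write $\C^{d}$ for $\C^{d(x)}$, identified with $T_xX$ via Lemma~\ref{lem:embedded-zariski-tangent}.

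\textbf{Part (i): a minimal embedding.} Fix an arbitrary local embedding $\jmath:(X,x)\hookrightarrow(\C^N,0)$ and let $K=T_xX\subset\C^N$ be the subspace of Lemma~\ref{lem:embedded-zariski-tangent}, which has dimension $d:=d(x)$.

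1. Since $\dim K=d$, the differentials at $0$ of the germs in $\mathcal I_{X,0}$ span the annihilator $K^\perp$, which has dimension $N-d$. Choose $f_1,\dots,f_{N-d}\in\mathcal I_{X,0}$ whose differentials $df_{1,0},\dots,df_{N-d,0}$ are linearly independent.

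2. By the implicit function theorem, $M:=\{f_1=\dots=f_{N-d}=0\}$ is, near $0$, a $d$-dimensional complex submanifold of $\C^N$ with $T_0M=K$.

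3. Since each $f_j$ vanishes on $\jmath(X)$, we have $\jmath(U)\subset M$ for a small neighbourhood $U$ of $x$.

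4. Choose a chart $M\cong G\subset\C^{d}$. It carries $\jmath(U)$ onto a closed analytic subset $A\subset G$, and composing gives the biholomorphism $\tau:U\to A$.

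\textbf{Part (ii): extending an arbitrary embedding.} Let $\psi:U\to\C^n$ be a local embedding at $x$.

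1. Write $\psi=(\psi_1,\dots,\psi_n)$. Each $\psi_j\circ\tau^{-1}$ is a holomorphic function on the analytic set $A$, so by the definition of holomorphic functions on a reduced space it extends holomorphically to a neighbourhood of $z$ in $G$. This yields a holomorphic map $\widetilde\psi:V\to\C^n$ satisfying~\eqref{eq:grauert-extension-identity}.

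2. It remains to show that $\widetilde\psi$ is an immersion at $z$; it is then an embedding after shrinking $V$. So suppose $d\widetilde\psi_z(w)=0$ for some $w\in\C^{d}$.

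3. Because $\psi$ is an embedding, the coordinate functions $\zeta_k$ of the chart $\tau$ factor through $\psi$. That is, $\zeta_k=h_k\circ\psi$ on $W$ for suitable holomorphic $h_k$ defined near $\psi(x)$ in $\C^n$.

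4. Then $z_k-h_k\circ\widetilde\psi$ vanishes on $A\cap V$, so it lies in the ideal $\mathcal I_{A,z}$. By minimality of the chart, every element of $\mathcal I_{A,z}$ has zero differential at $z$. Indeed, in the chart of Part~(i), Lemma~\ref{lem:embedded-zariski-tangent} identifies $T_xX$ with all of $\C^{d}$, so $K=\C^d$.

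5. Differentiating $z_k-h_k\circ\widetilde\psi$ at $z$ gives $dz_k(w)=dh_k\bigl(d\widetilde\psi_z(w)\bigr)=0$ for every $k$. Hence $w=0$.

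\textbf{Main obstacle.} The key point is Step~3 of Part~(ii): that the chart coordinates $\zeta_k$ factor through $\psi$. This uses that an embedding induces a surjection $\mathcal O_{\C^n,\psi(x)}\to\mathcal O_{X,x}$. Once that is granted, the injectivity of $d\widetilde\psi_z$ follows from the dimension count in Part~(i).
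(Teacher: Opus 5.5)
Your proof is correct and complete, but the paper gives no proof to compare it with: it quotes the statement from Grauert and Narasimhan and uses it only to regard $C_0(X,x)$ and $C_4(X,x)$ as intrinsic subsets of $T_xX$. Your argument is the standard one behind those references, in two moves. First, cut $\jmath(U)$ down to the submanifold $M=\{f_1=\dots=f_{N-d}=0\}$ with $T_0M=K$. Second, in a chart of dimension $d(x)$, every $F\in\mathcal I_{A,z}$ has $dF_z=0$, so the relations $z_k-h_k\circ\widetilde\psi\in\mathcal I_{A,z}$ force $d\widetilde\psi_z$ to be injective.

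Compared with the citation, your version makes this point of the paper self-contained. It uses only the implicit function theorem, local extendability of holomorphic functions on reduced analytic sets (once for $A\subset G$, once for $\psi(U)\subset\C^n$), and Lemma~\ref{lem:embedded-zariski-tangent}. It also gives, with one more line, what the paper actually needs. If $g$ vanishes on $\psi(U)$, then $g\circ\widetilde\psi$ vanishes on $A\cap V$, so $dg_{\psi(x)}\circ d\widetilde\psi_z=0$. By injectivity and the dimension count, $d\widetilde\psi_z$ therefore maps $\C^{d(x)}$ isomorphically onto the embedded copy of $T_xX$ in $\C^n$. That isomorphism is what makes the cones independent of the embedding.

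Two bookkeeping remarks. In Step 4 of (ii), the chart $(U,\tau,A)$ is arbitrary, not the one you built in (i). The vanishing of differentials on $\mathcal I_{A,z}$ should therefore come from applying Lemma~\ref{lem:embedded-zariski-tangent} to $\tau$ itself. Your dimension count works verbatim for any chart with $G\subset\C^{d(x)}$; only the phrase \emph{in the chart of Part (i)} is misleading. Also, in Step 3 you should shrink $V$ (and hence $W$) before differentiating. This ensures $\widetilde\psi(V)$ lies in the common domain of the $h_k$ and $\zeta_k=h_k\circ\psi$ holds on all of $W$.
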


\subsection{Finite analytic quotients} \label{subsec:finite-quotient-normal}

We recall in this subsection several standard facts about quotient spaces. 
Let $H$ be a finite group of biholomorphisms of a complex space $M$.  Its
analytic quotient $q:M\to M/H$ is the orbit space equipped with the sheaf
\begin{equation}\label{eq:analytic-quotient-sheaf}
  \mathcal O_{M/H}(V)
  :=\mathcal O_M\bigl(q^{-1}(V)\bigr)^H.
\end{equation}
By \citet[Theorem 4]{Cartan1957}, $M/H$ is a complex space and is also normal if $M$ is normal. We see that  holomorphic functions downstairs are exactly the
$H$-invariant holomorphic functions upstairs.  The map $q$ is finite and
proper.  For the analytic quotient theorem in this setting, see
\citet{Cartan1957}; finite holomorphic maps are treated systematically in
\citet[pp.~61--74]{GrauertRemmert1984}.

\begin{proposition}[{\citet[Corollary~3.46(2), p.~103]{Viehweg-book}}]
\label{prop:finite-quotient-projective} 
Let $X$ be a projective reduced complex space, and let $G$ be a finite
group acting on $X$ by biholomorphisms. Then the analytic quotient
$ Y:=X/G$ is projective.
\end{proposition}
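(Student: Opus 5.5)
The statement is Proposition \ref{prop:finite-quotient-projective}: the analytic quotient of a projective reduced complex space by a finite group is projective. The plan is the classical one: produce a $G$-invariant very ample line bundle on $X$, use it to embed $X$ equivariantly into projective space, and identify $X/G$ with the image of $X$ under the invariant sections of a high power. Throughout, GAGA lets us pass freely between the analytic and algebraic categories on projective $X$.

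\textbf{Step 1: an invariant ample bundle.} Fix a very ample line bundle $L$ on $X$. Set
\[
  M:=\bigotimes_{g\in G} g^*L .
\]
Each $g^*L$ is ample, because $g$ is an automorphism, so $M$ is ample. It carries a canonical $G$-linearization: $h\in G$ simply permutes the tensor factors. Replacing $M$ by a power, we may assume $M$ is very ample and that $G$ acts linearly on $V_k:=H^0(X,M^{k})$ for every $k$.

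\textbf{Step 2: the invariant ring.} Consider the graded ring $R:=\bigoplus_k V_k$, which is finitely generated. Since $G$ is finite, averaging (the Reynolds operator, valid in characteristic $0$) gives two facts:
\begin{itemize}[label=$\circ$]
\item $R^G$ is finitely generated, by Noether's theorem;
\item $R$ is finite over $R^G$, because every $s\in R$ is a root of the monic polynomial $\prod_g (T-g\cdot s)$, whose coefficients lie in $R^G$.
\end{itemize}

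\textbf{Step 3: the candidate quotient.} Set $Y':=\operatorname{Proj} R^G$, which is a projective variety. The inclusion $R^G\subset R$ induces a morphism $\pi:X\to Y'$. This morphism is defined everywhere: for $x\in X$ there is some $s\in V_1$ with $s(x)\neq0$, and then the invariant section $\prod_g g\cdot s$ does not vanish at $x$. The map $\pi$ is finite because $R$ is finite over $R^G$. It is constant on $G$-orbits, and it separates distinct orbits. To see the latter, take orbits $Gx\neq Gy$. Choose a section vanishing on $Gy$ and nonvanishing on $Gx$, which is possible by very ampleness after raising the power. The product of its $G$-translates is then an invariant section separating the two orbits.

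\textbf{Step 4: comparison with the analytic quotient.} By Step 3, $\pi$ induces a continuous bijection $X/G\to Y'$. It is a homeomorphism, because $X/G$ is compact and $Y'$ is Hausdorff. It remains to compare structure sheaves. Work on an affine chart $D_+(f)$ of $Y'$ with $f\in R^G$ homogeneous. There the regular functions are $(R_f)_0^G=\bigl((R_f)_0\bigr)^G$, since localization commutes with taking invariants for finite groups. Passing to the analytic category, we need
\[
  \mathcal O^{an}_{\operatorname{Spec}A^G}=\bigl(q_*\mathcal O^{an}_{\operatorname{Spec}A}\bigr)^G
\]
for finite $G$ acting on an affine variety $\operatorname{Spec}A$. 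This holds as follows. Embed $\operatorname{Spec}A$ equivariantly into a linear representation $\C^m$. By the analytic version of invariant theory, an invariant holomorphic germ is a holomorphic function of polynomial invariants; this is Cartan's result, obtained by averaging power series. Hence the sheaf in \eqref{eq:analytic-quotient-sheaf} agrees with $\mathcal O_{Y'}^{an}$, so $X/G\cong (Y')^{an}$ is projective.

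\textbf{Main obstacle.} I expect Step 4 to be the main obstacle. Steps 1--3 are standard algebra. The delicate point is the sheaf identification, in particular the passage from algebraic invariants to invariant holomorphic germs, together with possible non-reducedness issues. Non-reducedness is harmless here: $X$ is reduced, so $R^G$ is a subring of a reduced ring and is itself reduced.
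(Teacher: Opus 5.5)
Your proof is correct in substance, but it follows a different route from the paper, which gives no argument of its own. The paper simply quotes Viehweg's Corollary~3.46(2), a statement about geometric quotients of projective schemes in the algebraic category. Your Steps~1--3 reconstruct the standard algebraic proof behind such a statement: the norm bundle $M=\bigotimes_{g}g^*L$ with its permutation linearization, the finitely generated invariant ring $R^G$, and $Y'=\operatorname{Proj}R^G$. Your Step~4 supplies something the citation does not literally cover and the paper leaves implicit: Cartan's analytic quotient, with the sheaf \eqref{eq:analytic-quotient-sheaf}, is the analytification of that algebraic quotient. The citation buys brevity; your route buys a self-contained proof that matches the analytic definition actually used in Section~\ref{sec:compact-model}.

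Three local repairs are needed.

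\textbf{Step 3, base points.} The claim ``$s(x)\neq0$, hence $\prod_g g\cdot s$ does not vanish at $x$'' is false as written. The section $g\cdot s$ vanishes at $x$ exactly when $s$ vanishes at $g^{-1}x$. You must choose $s\in V_1$ nonvanishing on the whole orbit $Gx$. Such an $s$ exists because $V_1$ is not a finite union of the proper hyperplanes $\{s:s(gx)=0\}$, and you already do this correctly in the orbit-separation argument.

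\textbf{Step 4, bijectivity.} The ``continuous bijection'' also requires surjectivity of $\pi$. This follows from lying-over for the integral extensions $((R_f)_0)^G\subset(R_f)_0$.

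\textbf{Step 4, invariant germs.} ``Averaging power series'' handles germs directly only at $G$-fixed points. At an orbit with a smaller stabilizer, it expresses a germ through the stabilizer's invariants rather than the global polynomial invariants. You would also have to extend invariant germs from $\operatorname{Spec}A$ to the ambient representation, which you do not address. Both issues disappear if you argue sheaf-theoretically. For the finite morphism $\phi:\operatorname{Spec}A\to\operatorname{Spec}A^G$ one has $(\phi_*\mathcal F)^{\mathrm{an}}=\phi^{\mathrm{an}}_*\mathcal F^{\mathrm{an}}$. The sheaf $(\phi_*\mathcal O)^G$ is the image of the $\mathcal O_{\operatorname{Spec}A^G}$-linear Reynolds projector, and analytification is exact. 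Hence
\[
  \bigl(\phi^{\mathrm{an}}_*\mathcal O^{\mathrm{an}}_{\operatorname{Spec}A}\bigr)^G
  =\bigl((\phi_*\mathcal O_{\operatorname{Spec}A})^G\bigr)^{\mathrm{an}}
  =\mathcal O^{\mathrm{an}}_{\operatorname{Spec}A^G}
\]
at every point, with no equivariant embedding needed.
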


\section{Cones and Hermitian metrics} \label{sec-cone-dauam}

\begin{lemma}[Hermitian separation of a proper complex cone]
Let \(C\subset\mathbb C^n\) be a closed proper complex cone.
Then there exist a Hermitian sesquilinear form \(H\) on
\(\mathbb C^n\) and a constant \(c>0\) such that
\[
  H(z,z)\geq c\lVert z\rVert^2
  \qquad
  \text{for every } z\in C.
\]
In particular,
\[
  H(z,z)>0
  \qquad
  \text{for every } z\in C\setminus\{0\}.
\]
Moreover, \(H\) can be chosen to have signature \((n-1,1)\).
\end{lemma}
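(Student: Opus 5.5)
Since \(C\) is a proper closed complex cone, we can pick a point \(w\in\mathbb C^n\setminus C\), which we normalize to \(\lVert w\rVert=1\). The idea is to take \(H\) of the form
\[
  H_M(z,z):=\lVert z\rVert^2 - M\,\lvert\langle z,w\rangle\rvert^2,
\]
where \(\langle\cdot,\cdot\rangle\) is the standard Hermitian product and \(M>0\) is a constant still to be chosen. This choice gives the signature directly. On \(w^\perp\), which has dimension \(n-1\), \(H_M\) is the identity form. On the line \(\mathbb C w\) it equals \((1-M)\lvert z\rvert^2\), and the two pieces are \(H_M\)-orthogonal. So for \(M>1\) the signature is \((n-1,1)\).

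The positivity on \(C\) comes from a compactness argument on the unit sphere. Let \(K:=C\cap S^{2n-1}\); this set is compact because \(C\) is closed. Since \(C\) is a complex cone, the statement \(w\notin C\) is equivalent to \(e^{i\phi}w\notin C\) for all \(\phi\). We then claim
\[
  \sup_{z\in K}\lvert\langle z,w\rangle\rvert=:1-\varepsilon<1 .
\]
To see this, recall Cauchy--Schwarz: for unit vectors, \(\lvert\langle z,w\rangle\rvert=1\) holds exactly when \(z=e^{i\phi}w\) for some \(\phi\). None of these points lies in \(K\), and a continuous function on a compact set attains its supremum, so the supremum is strictly below \(1\). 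Now take \(z\in C\). By homogeneity we have \(\lvert\langle z,w\rangle\rvert^2\le (1-\varepsilon)^2\lVert z\rVert^2\), and therefore
\[
  H_M(z,z)\ge \bigl(1-M(1-\varepsilon)^2\bigr)\lVert z\rVert^2 .
\]

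It remains to choose \(M\) with \(1<M<(1-\varepsilon)^{-2}\). Such an \(M\) exists because \(\varepsilon>0\). With this choice the constant \(c:=1-M(1-\varepsilon)^2\) is positive, and \(H_M\) has signature \((n-1,1)\). The ``in particular'' statement then follows immediately.

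There is one degenerate case to check. If \(C=\{0\}\), then \(K\) is empty and the bound on \(C\) holds trivially. Any \(M>1\) then works, for example \(M=2\).

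The only delicate step is the strict inequality \(\sup_K\lvert\langle z,w\rangle\rvert<1\). It relies on two facts: \(C\) is closed, which gives compactness of \(K\), and \(C\) is invariant under multiplication by complex scalars (not merely positive reals), which excludes the whole circle \(e^{i\phi}w\) from \(K\) and not just the single point \(w\).
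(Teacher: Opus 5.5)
Your proof is correct and is essentially the paper's argument. The paper uses the same form $H(z,u)=\langle z,u\rangle-A\langle z,a\rangle\overline{\langle u,a\rangle}$ with a unit vector $a\notin C$, and the same compactness bound $\rho=\max_{K}|\langle u,a\rangle|^2<1$. The only difference is that it fixes the constant $A=2/(1+\rho)$, whereas you allow any $M$ with $1<M<(1-\varepsilon)^{-2}$.
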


\begin{proof}
If \(C=\{0\}\), the positivity assertion is vacuous, and any
Hermitian form of signature \((n-1,1)\) has the desired property.
We may therefore assume that \(C\neq\{0\}\).

Since \(C\) is proper, we may choose
\[
  a\in\mathbb C^n\setminus C,
  \qquad
  \lVert a\rVert=1.
\]
Because \(C\) is a complex cone, one has
\[
  \mathbb Ca\cap C=\{0\}.
\]
Indeed, if \(\lambda a\in C\) for some \(\lambda\neq0\), then
\[
  a=\lambda^{-1}(\lambda a)\in C,
\]
which contradicts the choice of \(a\).

Let
\[
  K:=C\cap S^{2n-1},
  \qquad
  S^{2n-1}:=
  \{z\in\mathbb C^n:\lVert z\rVert=1\}.
\]
Since \(C\) is closed, \(K\) is compact. With respect to the
standard Hermitian inner product
\[
  \langle z,w\rangle
  :=
  \sum_{j=1}^n z_j\overline{w_j},
\]
define
\[
  \rho:=
  \max_{u\in K}|\langle u,a\rangle|^2.
\]
We claim that \(\rho<1\). Otherwise, there would exist \(u\in K\)
such that
\[
  |\langle u,a\rangle|=1.
\]
Since \(\lVert u\rVert=\lVert a\rVert=1\), equality in the
Cauchy--Schwarz inequality implies that
\[
  u=e^{\sqrt{-1}\theta}a
\]
for some \(\theta\in\mathbb R\). Since \(u\in C\) and \(C\) is a
complex cone, it would follow that
\[
  a=e^{-\sqrt{-1}\theta}u\in C,
\]
a contradiction.

Set
\[
  A:=\frac{2}{1+\rho}.
\]
Since \(0\leq\rho<1\), we have
\[
  A>1
  \qquad\text{and}\qquad
  A\rho<1.
\]
Define
\[
  H(z,w)
  :=
  \langle z,w\rangle
  -
  A\langle z,a\rangle
  \overline{\langle w,a\rangle}.
\]
This is a Hermitian sesquilinear form.

Let \(z\in C\setminus\{0\}\), and set
\[
  u:=\frac{z}{\lVert z\rVert}\in K.
\]
Then
\begin{align*}
  H(z,z)
  &=
  \lVert z\rVert^2
  -
  A|\langle z,a\rangle|^2 \\
  &=
  \lVert z\rVert^2
  \left(
    1-A|\langle u,a\rangle|^2
  \right) \\
  &\geq
  (1-A\rho)\lVert z\rVert^2.
\end{align*}
Furthermore,
\[
  1-A\rho
  =
  1-\frac{2\rho}{1+\rho}
  =
  \frac{1-\rho}{1+\rho}>0.
\]
Thus the conclusion holds with
\[
  c:=\frac{1-\rho}{1+\rho}.
\]

Finally, write
\[
  z=z^\perp+\lambda a,
  \qquad
  z^\perp\perp a.
\]
Then
\[
  H(z,z)
  =
  \lVert z^\perp\rVert^2+(1-A)|\lambda|^2.
\]
Hence \(H\) is positive definite on \(a^\perp\), while
\[
  H(a,a)=1-A<0.
\]
Therefore \(H\) has signature \((n-1,1)\).
\end{proof}

\begin{lemma}\label{lem:hermitian-uniqueness-cone}
Let $C_0$ be an embedded connected complex submanifold  of $\C^n$ such that it is   not contained in a complex
hyperplane of $\C^n$. Let \(H\) and \(H'\) be Hermitian forms on
\(\mathbb C^n\) such that
\[
  H'(z,z)=H(z,z)
  \qquad
  \text{for every }z\in C_0.
\]
Then \(H'=H\). %Consequently, if \(H\) has signature \((n-1,1)\), then \(H'\) cannot be positive definite.
\end{lemma}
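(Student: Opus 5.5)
The plan is to work with the difference $D:=H'-H$. It is a Hermitian form on $\C^n$ whose quadratic form $D(z,z)$ vanishes identically on $C_0$. We must show $D=0$. The idea is to polarize the identity $D(z,z)=0$ along $C_0$, using the holomorphic structure of $C_0$. This gives the full sesquilinear identity $D(z,w)=0$ for all $z,w\in C_0$, and the spanning hypothesis then kills $D$.

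\emph{Step 1 (local polarization).} Fix a point $p\in C_0$ and a local holomorphic parametrization $\gamma:B\to C_0$, with $B\subset\C^k$ a small ball and $\gamma$ holomorphic. Consider
\[
  F(t,s):=D(\gamma(t),\gamma(s)), \qquad t,s\in B.
\]
It is holomorphic in $t$ and antiholomorphic in $s$. Equivalently, $G(t,\sigma):=F(t,\bar\sigma)$ is holomorphic on $B\times\overline B$. By hypothesis $G(t,\bar t)=0$ for all $t\in B$. Expand $G$ in a power series $\sum c_{\alpha\beta}(t-t_0)^\alpha(\sigma-\bar t_0)^\beta$ around $(t_0,\bar t_0)$. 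Vanishing on the totally real set $\{\sigma=\bar t\}$ means $\sum c_{\alpha\beta}u^\alpha\bar u^\beta\equiv0$ near $u=0$. Applying $\partial_u^\alpha\partial_{\bar u}^\beta$ at $0$ gives all $c_{\alpha\beta}=0$. Hence $F\equiv0$ on $B\times B$.

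\emph{Step 2 (spanning on a small piece).} For fixed $s$, the linear functional $\ell_s(z):=D(z,\gamma(s))$ vanishes on $\gamma(B)$. It then vanishes on all of $C_0$, because the restriction of $\ell_s$ to the connected complex manifold $C_0$ is holomorphic and vanishes on an open subset, so the identity theorem applies. Since $C_0$ lies in no complex hyperplane, its linear span is $\C^n$. Therefore $\ell_s=0$, that is, $D(z,\gamma(s))=0$ for all $z\in\C^n$ and $s\in B$.

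\emph{Step 3 (conclusion).} Now fix any $z\in\C^n$. The function $w\mapsto \overline{D(z,w)}$ is a linear functional, and by Step 2 it vanishes on $\gamma(B)$. Running the same identity-theorem and spanning argument as in Step 2, it vanishes on all of $\C^n$. Hence $D\equiv0$, i.e.\ $H'=H$.

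The main obstacle is Step 1: upgrading vanishing of the quadratic form on $C_0$ to vanishing of the sesquilinear form on $C_0\times C_0$. This is where holomorphicity of $C_0$ is essential; for a merely real submanifold the conclusion fails. The remaining steps only use connectedness, through the identity theorem, to pass from a local chart to the global non-degeneracy hypothesis.
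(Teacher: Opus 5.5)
Your proof is correct and follows essentially the same route as the paper. The paper also polarizes by showing that a holomorphic function vanishing on the totally real set $\{\sigma=\bar t\}$ vanishes identically; it phrases this through the bilinear complexification $\widetilde B$ and the conjugate parametrization $\varphi^\#$, and then concludes with analytic continuation and the spanning hypothesis. Your one-variable-at-a-time use of the identity theorem (Steps 2 and 3) simply makes explicit what the paper compresses into ``by analytic continuation.''
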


\begin{proof}
Set
\[
  B:=H'-H.
\]
Then \(B\) is a Hermitian form satisfying
\[
  B(z,z)=0
  \qquad
  \text{for every }z\in C_0.
\]
Write
\[
  B(z,w)
  =
  \sum_{j,k=1}^{n}b_{jk}z_j\overline{w_k},
  \qquad
  b_{jk}=\overline{b_{kj}},
\]
and consider the complex bilinear polynomial
\[
  \widetilde B(z,\zeta)
  :=
  \sum_{j,k=1}^{n}b_{jk}z_j\zeta_k,
  \qquad
  (z,\zeta)\in\mathbb C^n\times\mathbb C^n.
\]
Thus
\[
  B(z,z)=\widetilde B(z,\overline z).
\]
Let   $ \varphi:U\longrightarrow C_0$
be a local holomorphic parametrization of $C_0$, where
\(U\subset\mathbb C^{k}\) is open and $k=\dim C_0$. Set
\[
  \varphi^\#(s):=\overline{\varphi(\overline s)}.
\]
After shrinking \(U\), this defines a holomorphic parametrization
of the conjugate manifold \(\overline{C_0}\).
The function
\[
  F(t,s)
  :=
  \widetilde B\bigl(\varphi(t),\varphi^\#(s)\bigr)
\]
is holomorphic in \((t,s)\). For \(s=\overline t\), we have
\[
  F(t,\overline t)
  =
  \widetilde B\bigl(\varphi(t),\overline{\varphi(t)}\bigr)
  =
  B(\varphi(t),\varphi(t))
  =
  0.
\]
The set
\[
  \{(t,\overline t):t\in U\}
\]
is maximally totally real in
\(\mathbb C^{k}\times\mathbb C^{k}\). It follows that
\[
  F\equiv0.
\]
By analytic continuation,
\[
  \widetilde B(z,\zeta)=0
  \qquad
  \text{for every }
  (z,\zeta)\in C_0\times\overline{C_0}.
\]
Since \(C_0\) is not contained in any complex hyperplane, we get
\[
  \operatorname{Span}_{\mathbb C}(C_0)=\mathbb C^n.
\]
Similarly,
\[
  \operatorname{Span}_{\mathbb C}(\overline{C_0})=\mathbb C^n.
\]
Since \(\widetilde B\) is complex bilinear and vanishes on
\(C_0\times\overline{C_0}\), it follows that
\[
  \widetilde B\equiv0
  \qquad
  \text{on }\mathbb C^n\times\mathbb C^n.
\]
Therefore \(B=0\), and hence  $H'=H.$
\end{proof}

\section{Positivity of smooth forms}
\label{subsec:positivity-regular-locus}

In this section, we prove  Theorems \ref{thm-tangentialpositive} and \ref{thm-tangentialpositive2}. The following lemma follows directly from the definition of positive currents. 
\begin{lemma}
\label{lem:positivity-regular-locus}
Let $X$ be a complex space of pure dimension $n$, let $\theta$ be a
smooth real $(1,1)$-form on $X$, and let $\omega$ be a Hermitian metric on
$X$.
\begin{enumerate}[label=\textup{(\roman*)}]
\item The smooth form $\theta$ is positive as a current on $X$ if and only
  if $\theta|_{X_{\reg}}$ is pointwise semipositive.
\item For every $\delta>0$,
  \[
    \theta\geq\delta\omega\quad\text{as currents on }X
    \quad\Longleftrightarrow\quad
    \theta|_{X_{\reg}}\geq
      \delta\omega|_{X_{\reg}}\quad\text{pointwise}.
  \]
  Consequently, if $d\theta=0$, then $\theta$ is a K\"ahler current precisely when its
  restriction to $X_{\reg}$ is \emph{uniformly K\"ahler relative to $X$},
  meaning that the displayed pointwise inequality holds for some Hermitian
  metric $\omega$ on $X$ and one constant $\delta>0$.
\end{enumerate}
In particular, a smooth K\"ahler current restricts to a K\"ahler form on
$X_{\reg}$.
\end{lemma}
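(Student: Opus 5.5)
The plan is to reduce both assertions to the standard fact that on $X_{\reg}$ the positivity of a current agrees with pointwise positivity, and then show that the singular locus is invisible to tests against test forms. Fix a local embedding $\jmath:U\hookrightarrow\Omega\subset\C^N$. Pairing $\theta$, as a current, with a compactly supported positive $(n-1,n-1)$-form $\Phi$ on $X$ gives
\[
  \langle\theta,\Phi\rangle=\int_{X_{\reg}}\theta\wedge\Phi ,
\]
because the current of integration $[X]$ is given by integration over $X_{\reg}$, and $X_{\sing}$ has measure zero for the trace measure. (Here $\Phi$ is the restriction of an ambient form, and $\theta\wedge\Phi$ is a smooth $(n,n)$-form whose integral converges by Lelong's theorem, since $[X_{\reg}]$ has locally finite mass.)

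\emph{Step 1 ($\Leftarrow$ in (i)).} Assume $\theta|_{X_{\reg}}\ge0$ pointwise. On the $n$-dimensional manifold $X_{\reg}$, the pullback of a positive $(n-1,n-1)$-form is positive, and the wedge of a semipositive $(1,1)$-form with a positive $(n-1,n-1)$-form is a positive $(n,n)$-form. Hence the integrand is pointwise $\ge0$, and therefore $\langle\theta,\Phi\rangle\ge0$.

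\emph{Step 2 ($\Rightarrow$ in (i)).} Let $x\in X_{\reg}$ and $v\in T_xX_{\reg}$, and argue by contradiction: suppose $\theta(x)(v,\bar v)<0$. Choose local coordinates $w$ on $X_{\reg}$ near $x$ (a small neighborhood disjoint from $X_{\sing}$) with $v=\partial/\partial w_1$. Take $\Phi=\chi\,\widehat{i dw_1\wedge d\bar w_1}$, meaning $\chi$ times the wedge of all $i\,dw_j\wedge d\bar w_j$ with $j\ne1$, where $\chi\ge0$ is a cutoff supported near $x$. This $\Phi$ is a positive form with compact support in $X_{\reg}$, and it is the restriction of an ambient smooth form. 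To see the latter, extend the coordinates $w$ to a local ambient chart of $\Omega$ in which $X$ is a coordinate plane, multiply by a cutoff in the transverse directions, and check that the result is ambient positive. Then $\langle\theta,\Phi\rangle=\int\chi\,\theta_{1\bar1}\,dV<0$ for small support, which is a contradiction. The only point needing care is exactly this: that forms supported in $X_{\reg}$ are legitimate positive test forms in the sense of the definition, i.e.\ restrictions of ambient positive forms.

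\emph{Step 3 (part (ii) and the consequences).} Apply (i) to $\theta-\delta\omega$, which is again a smooth real $(1,1)$-form on $X$; this gives the equivalence in (ii). Since $d\theta=0$ and $\theta\ge\delta\omega\ge0$, the Kähler current statement is just the definition of a Kähler current rewritten through this equivalence. Finally, $\omega|_{X_{\reg}}$ is a Hermitian metric on $X_{\reg}$, being the restriction of a strictly positive ambient form to a submanifold. So $\theta|_{X_{\reg}}\ge\delta\omega>0$, and $\theta$ is closed, which makes it a Kähler form on $X_{\reg}$ (locally it has smooth strictly psh potentials by the $\ddc$-lemma on small balls).
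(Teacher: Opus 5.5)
Your proposal is correct and follows essentially the same route as the paper. Both arguments integrate over $X_{\reg}$ for the easy direction, localize with bump test forms supported in coordinate balls of $X_{\reg}$ for the converse, and apply (i) to $\theta-\delta\omega$ for (ii).

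You are more explicit than the paper on one point. You construct an ambient positive extension of a test form supported in $X_{\reg}$, whereas the paper simply asserts that such forms are test forms on $X$.
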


One should note that in Lemma \ref{lem:positivity-regular-locus}, $X$ must be of pure dimension because otherwise the current associated with a smooth form will always be zero on irreducible components of $X$ of dimension strictly less than $\dim X$.  

\begin{proof}
A smooth $(1,1)$-form on a reduced pure-dimensional complex space defines a
current by integration over the regular locus:
\[
  \langle\theta,\Phi\rangle
  =\int_{X_{\reg}}\theta\wedge\Phi,
\]
where $\Phi$ is a compactly supported test form of bidegree $(n-1,n-1)$.
If $\theta|_{X_{\reg}}$ is semipositive, then the integrand is a nonnegative
measure for every strongly positive $\Phi$, and hence $\theta$ is a positive
current.

Conversely, assume that the current defined by $\theta$ is positive.  Let
$V\Subset X_{\reg}$ be a coordinate ball.  Every compactly supported
strongly positive test form on $V$ is also a test form on $X$.  The
restriction of the current to $V$ is therefore positive.  Since its
coefficients are smooth on the manifold $V$, the standard localization test
for smooth forms shows that $\theta|_V$ is pointwise semipositive: otherwise
one could choose a nonnegative bump function supported near a negative
direction and obtain a negative pairing.  Such coordinate balls cover
$X_{\reg}$, proving \textup{(i)}.

Apply \textup{(i)} to the smooth form $\theta-\delta\omega$.  This gives the
equivalence in \textup{(ii)}.  If $d\theta=0$, the definition of a
K\"ahler current gives the final assertion.  This argument is the
smooth-form specialization of the positivity conventions in
\citet[Chapter~III, \S1]{Demailly2012}.  
\end{proof}

Now we prove another equivalence criterion for being positive as currents. 

\begin{lemma}
  \label{le-tangentialpositive}
  Let $X$ be a complex space of pure dimension $n$, let $\theta$ be a
smooth real $(1,1)$-form on $X$. Then,  $\theta$ is positive as a current on $X$ if and only if it is $C_4(X,x)$-
positive at $x$ for every $x\in X$.
\end{lemma}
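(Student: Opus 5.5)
By Lemma \ref{lem:positivity-regular-locus}(i), $\theta$ is positive as a current if and only if $\theta|_{X_{\reg}}$ is pointwise semipositive. It therefore suffices to show that pointwise semipositivity on $X_{\reg}$ is equivalent to $C_4(X,x)$-positivity at every $x\in X$. Throughout we work in a local embedding $\jmath:(X,x)\hookrightarrow(\C^N,0)$ with a smooth ambient extension $\theta'$ of $\theta$. First we check that the quantity $\theta'(x)(v,\overline v)$ for $v\in C_4(X,x)$ does not depend on the choice of extension. Two extensions differ by a form $\alpha$ with $\jmath^*\alpha=0$, so $\alpha(y)$ vanishes on $T_yX_{\reg}\times T_yX_{\reg}$ at regular points $y$. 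By continuity of $\alpha$ and the definition of $C_4$ as limits of tangent vectors $v_\nu\in T_{x_\nu}X_{\reg}$, we get $\alpha(x)(v,\overline v)=\lim\alpha(x_\nu)(v_\nu,\overline{v_\nu})=0$. Independence of the embedding follows from Theorem \ref{thm:grauert-local-extension}, since any two embeddings are related through the minimal one by ambient biholomorphisms whose differentials carry $C_4$ to $C_4$ and extensions to extensions.

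($\Leftarrow$) Let $y\in X_{\reg}$. Near a regular point the set $X_{\reg}$ is a submanifold, and every vector of $T_yX_{\reg}$ is a limit of tangent vectors at regular points (take the constant sequence $x_\nu=y$). Hence $T_yX=T_yX_{\reg}\subset C_4(X,y)$; in fact equality holds, as recalled in the introduction. $C_4(X,y)$-positivity at $y$ gives $\theta'(y)(v,\overline v)\ge0$ for all $v\in T_yX_{\reg}$. Since $\theta|_{X_{\reg}}(y)$ is exactly the restriction of $\theta'(y)$ to $T_yX_{\reg}$, it is semipositive.

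($\Rightarrow$) Assume $\theta|_{X_{\reg}}\ge0$ pointwise. Fix $x\in X$, an extension $\theta'$, and $v\in C_4(X,x)$. Choose $x_\nu\in X_{\reg}$ with $x_\nu\to x$ and $v_\nu\in T_{x_\nu}X_{\reg}$ with $v_\nu\to v$. Semipositivity gives $\theta'(x_\nu)(v_\nu,\overline{v_\nu})\ge0$. Since $\theta'$ is smooth, hence continuous, on the ambient open set, passing to the limit yields $\theta'(x)(v,\overline v)\ge0$. This holds for every extension, which is $C_4(X,x)$-positivity.

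The argument is essentially a continuity argument. The only delicate point is bookkeeping: the definition of $C_4$-positivity quantifies over all extensions and all local embeddings. The $(\Rightarrow)$ limit argument works verbatim for any embedding and extension, so the independence check in the first paragraph is really needed only for consistency, not for the proof itself. Pure dimensionality enters solely through Lemma \ref{lem:positivity-regular-locus}(i).
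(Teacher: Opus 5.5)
Your proposal is correct and follows essentially the same route as the paper: reduce via Lemma~\ref{lem:positivity-regular-locus}(i) to pointwise semipositivity on $X_{\reg}$, use the limit argument along $x_\nu\in X_{\reg}$, $v_\nu\in T_{x_\nu}X_{\reg}$ for $(\Rightarrow)$, and use $T_yX_{\reg}\subset C_4(X,y)$ at regular points for $(\Leftarrow)$. Your extension-independence check is the paper's preliminary observation, and, as you note yourself, the embedding-independence remark via Theorem~\ref{thm:grauert-local-extension} is not needed for the argument.
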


\begin{proof}
     Fix $x\in X$ and a local embedding
\[
  \jmath:U\hookrightarrow\Omega\subset\C^N
\]
with $\jmath(x)=0$, and let $\widetilde\theta$ be an arbitrary smooth
ambient extension of $\theta$.

We first record the elementary observation that explains the appearance
of the fourth Whitney cone. Suppose that $\widetilde\theta_1$ and
$\widetilde\theta_2$ are two ambient representatives of the same smooth
form on $X$, and put
\[
  \gamma:=\widetilde\theta_1-\widetilde\theta_2.
\]
On $X_{\reg}$ one has
\[
  \gamma_y|_{T_yX_{\reg}}=0.
\]
Let $v\in C_4(X,x)$. By Definition~\ref{def:fourth-whitney-cone},
there are points
\[
  x_\nu\in X_{\reg},\qquad x_\nu\to x,
\]
and vectors
\[
  v_\nu\in T_{x_\nu}X_{\reg},\qquad v_\nu\to v.
\]
Hence
\[
  \gamma_{x_\nu}(v_\nu,\overline{v_\nu})=0
\]
for every $\nu$. By the smoothness of $\gamma$,
\[
  \gamma_x(v,\overline v)
  =
  \lim_{\nu\to\infty}
  \gamma_{x_\nu}(v_\nu,\overline{v_\nu})
  =0.
\]
Thus all ambient representatives induce the same quadratic Hermitian
form on $C_4(X,x)$. Notice that the same conclusion need not hold on the
whole Zariski tangent space $T_xX$.

Assume first that $\theta$ is positive as a current on $X$. By the
regular-locus criterion of Lemma~\ref{lem:positivity-regular-locus}, its restriction
\[
  \theta|_{X_{\reg}}
\]
is pointwise semipositive.

Let $v\in C_4(X,x)$. Choose $x_\nu$ and $v_\nu$ as above. Since
$\widetilde\theta$ represents $\theta$, for every $\nu$ we have
\[
  \widetilde\theta_{x_\nu}
  (v_\nu,\overline{v_\nu})
  =
  \theta_{x_\nu}
  (v_\nu,\overline{v_\nu})
  \geq0.
\]
Passing to the limit gives
\[
  \widetilde\theta_x(v,\overline v)
  =
  \lim_{\nu\to\infty}
  \widetilde\theta_{x_\nu}
  (v_\nu,\overline{v_\nu})
  \geq0.
\]
Since both the local embedding and the ambient representative
$\widetilde\theta$ were arbitrary, $\theta$ is $C_4(X,x)$-positive.
Since $x$ was arbitrary, this holds at every point of $X$.

Conversely, suppose that $\theta$ is $C_4(X,x)$-positive at every
$x\in X$. Let $x\in X_{\reg}$. Since $X$ is a complex manifold near
$x$, the tangent spaces vary continuously there, and
\[
  C_4(X,x)=T_xX.
\]
Therefore, for every $v\in T_xX$,
\[
  \theta_x(v,\overline v)\geq0.
\]
It follows that
\[
  \theta|_{X_{\reg}}
\]
is pointwise semipositive. Applying again
Lemma~\ref{lem:positivity-regular-locus}, we conclude that $\theta$ is a
positive current on $X$.

\end{proof}

\begin{proof}[Proof of Theorem \ref{thm-tangentialpositive}]
Write $x:=x_0$. Assume, by contradiction, that $\theta$ is
positive as a smooth form.

By the definition of positivity for smooth forms, there exist a local
embedding
\[
  \jmath_1:(X,x)\hookrightarrow(\Omega_1,0)
\]
and a smooth semipositive real $(1,1)$-form $\eta$ on $\Omega_1$ whose
restriction to $X$ is $\theta$. In particular, the Hermitian form
\[
  H:=\eta_x|_{T_xX}
\]
is semipositive on $T_xX$.

By hypothesis, there is another local embedding
\[
  \jmath_2:(X,x)\hookrightarrow(\Omega_2,0)
\]
and a smooth extension $\theta'$ of $\theta$ such that
\[
  H':=\theta'_x|_{T_xX}
\]
is not semipositive.

We claim that
\[
  H(v,v)=H'(v,v)
  \qquad\text{for every }v\in C_4(X,x).
  \tag{*}
\]
To see this, we may replace the two embeddings by their product
embedding
\[
  \jmath=(\jmath_1,\jmath_2):
  (X,x)\hookrightarrow(\Omega_1\times\Omega_2,(0,0)).
\]
The forms
\[
  \pi_1^*\eta
  \qquad\text{and}\qquad
  \pi_2^*\theta'
\]
are then two smooth ambient extensions of the same smooth form $\theta$.

Let $v\in C_4(X,x)$. By
Definition~\ref{def:fourth-whitney-cone}, there exist
\[
  x_\nu\in X_{\reg},\qquad x_\nu\to x,
\]
and
\[
  v_\nu\in T_{x_\nu}X_{\reg},\qquad v_\nu\to v.
\]
Since the two ambient forms restrict to the same form $\theta$ on
$X_{\reg}$, we have
\[
  (\pi_1^*\eta)_{x_\nu}(v_\nu,\overline{v_\nu})
  =
  (\pi_2^*\theta')_{x_\nu}(v_\nu,\overline{v_\nu})
\]
for every $\nu$. Passing to the limit yields
\[
  H(v,\overline v)=H'(v,\overline v),
\]
which proves \textup{(*)}.

Now regard $T_xX$ as a complex vector space, and let $Z$ be an
irreducible component of $C_4(X,x)$ which is not contained in any complex
hyperplane. Its regular locus $Z_{\reg}$ is a connected complex
submanifold and is dense in $Z$; hence $Z_{\reg}$ is not contained in any
complex hyperplane either. Since \textup{(*)} holds on $Z_{\reg}$,
Lemma~\ref{lem:hermitian-uniqueness-cone}, applied to $Z_{\reg}$, $H$, and
$H'$, gives
\[
  H=H'
  \qquad\text{on }T_xX.
\]
But $H$ is semipositive, whereas by hypothesis $H'$ is not
semipositive. This is a contradiction.
Therefore $\theta$ cannot be positive as a smooth form.
\end{proof}

\begin{proof}[Proof of Theorem \ref{thm-tangentialpositive2}]
Set
\[
  V:=T_{x_0}X,
  \qquad
  C:=C_4(X,x_0),
  \qquad
  d:=\dim_{\C}V.
\]
By assumption~\textup{(i)},
\[
  \dim_{\C}C<d,
\]
and hence $C$ is a proper closed complex cone in $V$.

Choose a K\"ahler form $\omega$ on $X$. After shrinking around $x_0$,
choose a local embedding
\[
  \jmath:U\hookrightarrow\Omega\subset\C^N,
  \qquad \jmath(x_0)=0,
\]
and a smooth strictly plurisubharmonic function $\rho$ on $\Omega$ such
that
\begin{equation}\label{eq:thm2-local-potential-omega}
  \omega|_U=\ddc(\rho|_U).
\end{equation}
We identify $U$ with its image and, by
Lemma~\ref{lem:embedded-zariski-tangent}, identify $V$ with a complex
linear subspace of $\C^N$. We equip $V$ with the restriction of the
Euclidean norm.

The Hermitian separation lemma of Section~\ref{sec-cone-dauam}, applied
to $C\subset V$, gives a Hermitian form $H$ on $V$ and a constant $c>0$
such that
\begin{equation}\label{eq:thm2-H-positive-on-C4}
  H(v, v)\geq c\lVert v\rVert^2
  \qquad\text{for every }v\in C,
\end{equation}
and $H$ has signature $(d-1,1)$. Extend $H$ to a Hermitian form
$\widetilde H$ on $\C^N$, and let $q$ be a real Hermitian quadratic
polynomial on $\C^N$ such that the constant real $(1,1)$-form $\ddc q$
has associated Hermitian form $\widetilde H$.

We first pass from strict positivity on $C$ to uniform strict positivity
on the nearby tangent spaces of the regular locus. After shrinking $U$,
one has
\begin{equation}\label{eq:thm2-q-nearby-tangents}
  (\ddc q)_y(\xi,\overline\xi)
  \geq \frac{c}{2}\lVert\xi\rVert^2
\end{equation}
for every $y\in U\cap X_{\reg}$ and every
$\xi\in T_yX_{\reg}$. Indeed, if this failed in every neighborhood of
$x_0$, there would be sequences
\[
  y_\nu\in X_{\reg},\qquad y_\nu\longrightarrow x_0,
\]
and unit vectors $\xi_\nu\in T_{y_\nu}X_{\reg}$ such that
\[
  (\ddc q)_{y_\nu}(\xi_\nu,\overline{\xi_\nu})<\frac c2.
\]
After passing to a subsequence, $\xi_\nu\to\xi$ for some unit vector
$\xi\in\C^N$. By Definition~\ref{def:fourth-whitney-cone},
$\xi\in C\subset V$. Since $\widetilde H|_V=H$, passage to the limit
contradicts \eqref{eq:thm2-H-positive-on-C4}.
Let
\[
  \omega_{\mathrm{euc}}:=\ddc\lVert z\rVert^2.
\]
Since $\omega$ is represented by the smooth ambient form $\ddc\rho$, we
may shrink $U$ once more and choose $A_0>0$ such that
\[
  \omega\leq A_0\omega_{\mathrm{euc}}|_X
  \qquad\text{on }U\cap X_{\reg}.
\]
Consequently, \eqref{eq:thm2-q-nearby-tangents} gives
\begin{equation}\label{eq:thm2-q-dominates-omega}
  \ddc(q|_X)\geq\delta_0\omega
  \qquad\text{on }U\cap X_{\reg},
  \qquad
  \delta_0:=\frac{c}{2A_0}>0.
\end{equation}
We now prepare the two potentials for gluing. Since $\rho$ is strictly
plurisubharmonic on $\Omega$, we can choose $\lambda>0$ so small that
\[
  \ddc(\rho-\lambda q)_0
\]
is positive definite on $\C^N$. Adding to $\rho$ the real part of a
holomorphic polynomial of degree at most two does not change
\eqref{eq:thm2-local-potential-omega}. We may therefore arrange that
\begin{equation}\label{eq:thm2-normalized-difference}
  \rho(z)-\lambda q(z)
  =G(z,z)+O(\lVert z\rVert^3),
\end{equation}
where $G$ is a positive definite Hermitian form on $\C^N$. It follows
that, after shrinking $\Omega$, there exist $a,b>0$ and $R_0>0$ such
that
\begin{equation}\label{eq:thm2-quadratic-comparison}
  a\lVert z\rVert^2
  \leq \rho(z)-\lambda q(z)
  \leq b\lVert z\rVert^2
  \qquad\text{for }\lVert z\rVert\leq R_0.
\end{equation}
We choose $R_0$ so that $\overline B(0,R_0)\Subset\Omega$ and all the
preceding estimates hold over $B(0,R_0)$.

Choose radii $0<r<R<R_0$ satisfying
\[
  br^2<aR^2,
\]
and choose $\kappa\in\mathbb R$ such that
\begin{equation}\label{eq:thm2-choice-kappa}
  br^2<\kappa<aR^2.
\end{equation}
Then \eqref{eq:thm2-quadratic-comparison} gives
\begin{equation}\label{eq:thm2-inner-branch}
  \lambda q+\kappa>\rho
  \qquad\text{on }\overline B(0,r),
\end{equation}
and
\begin{equation}\label{eq:thm2-outer-branch}
  \lambda q+\kappa<\rho
  \qquad\text{on }\{\lVert z\rVert=R\}.
\end{equation}
The latter inequality remains strict on a collar of the sphere.
Choose a smooth even convex function
$\chi_\varepsilon:\mathbb R\to\mathbb R$ such that
\[
  \chi_\varepsilon(t)=|t|
  \quad\text{for }|t|\geq\varepsilon,
  \qquad
  |\chi_\varepsilon'|\leq1,
\]
and set
\[
  M_\varepsilon(s,t)
  :=\frac{s+t+\chi_\varepsilon(s-t)}{2}.
\]
Thus $M_\varepsilon$ is smooth, convex, nondecreasing in each variable,
translation equivariant, and equal to $\max\{s,t\}$ whenever
$|s-t|\geq\varepsilon$. By compactness in
\eqref{eq:thm2-inner-branch} and on a closed outer collar, we may choose
$\varepsilon>0$ smaller than both positive gaps. Define
\[
  \psi:=M_\varepsilon(\lambda q+\kappa,\rho)\big|_{X\cap B(0,R)}.
\]
Then
\begin{equation}\label{eq:thm2-psi-branches}
  \psi=\lambda q+\kappa
  \quad\text{near }X\cap\overline B(0,r),
  \qquad
  \psi=\rho
  \quad\text{near }X\cap\partial B(0,R).
\end{equation}
We claim that
\begin{equation}\label{eq:thm2-psi-lower-bound}
  \ddc\psi\geq\delta_1\omega
  \qquad\text{on }X_{\reg}\cap B(0,R),
  \qquad
  \delta_1:=\min\{\lambda\delta_0,1\}>0.
\end{equation}
Indeed, for arbitrary smooth functions $f,g$, direct differentiation
gives
\begin{align*}
  \ddc M_\varepsilon(f,g)
  ={}&a_\varepsilon(f-g)\,\ddc f
     +\bigl(1-a_\varepsilon(f-g)\bigr)\,\ddc g\\
   &+\frac12\chi_\varepsilon''(f-g)
     \,d(f-g)\wedge d^c(f-g),
\end{align*}
where
\[
  a_\varepsilon(t):=\frac{1+\chi_\varepsilon'(t)}2\in[0,1].
\]
The last term is semipositive. Applying the formula with
$f=\lambda q+\kappa$ and $g=\rho$, and using
\eqref{eq:thm2-local-potential-omega} and
\eqref{eq:thm2-q-dominates-omega}, proves
\eqref{eq:thm2-psi-lower-bound}.

We also verify that $\psi$ is a psh function on the singular space. By
\eqref{eq:thm2-q-dominates-omega} and the fact that $q$ is continuous, $q|_U$ is plurisubharmonic on
$U\cap X_{\reg}$. This combined with the continuity of $q$ and the Forn\ae ss--Narasimhan theorem
\citep[Theorem~5.3.1]{FornaessNarasimhan1980} shows that $q|_U$ is
plurisubharmonic on $U$. The function $\rho|_U$ is psh as well. The
holomorphic-disc characterization recalled in
Section~\ref{subsec:currents-on-spaces}, together with the convexity and
coordinatewise monotonicity of $M_\varepsilon$, now shows that $\psi$ is
psh on $X\cap B(0,R)$. By \eqref{eq:thm2-psi-branches}, the smooth function
\[
  \varphi:=\psi-\rho|_X
\]
on $X\cap B(0,R)$ vanishes on a neighborhood of the boundary. It
therefore extends by zero to a global smooth real-valued function on
$X$. Put
\begin{equation}\label{eq:thm2-global-form}
  \theta:=\omega+\ddc\varphi.
\end{equation}
Then $\theta$ is a global real closed smooth $(1,1)$-form. On
$X\cap B(0,R)$ one has
\[
  \theta=\ddc\psi,
\]
while outside this neighborhood one has $\theta=\omega$. Since $\psi$
is psh and agrees with the psh potential $\rho$ on a boundary collar,
these local potentials fit together. Thus $\theta$ has local potentials
in the sense of Section~\ref{subsec:currents-on-spaces}. By \eqref{eq:thm2-psi-lower-bound},
\[
  \theta\geq\delta_1\omega
  \qquad\text{on }X_{\reg}\cap B(0,R),
\]
and outside this neighborhood $\theta=\omega$. Hence, we get
\[
  \theta\geq\delta_1\omega
  \qquad\text{pointwise on }X_{\reg}.
\]
Lemma~\ref{lem:positivity-regular-locus} yields the same inequality as
currents on $X$. Therefore, $\theta$ is a K\"ahler current.
Finally, by \eqref{eq:thm2-psi-branches}, near $x_0$ one has
\[
  \theta=\lambda\ddc(q|_X).
\]
The local ambient extension $\lambda\ddc q$ induces on
$T_{x_0}X=V$ the Hermitian form $\lambda H$, which has signature
$(d-1,1)$ and is therefore not semipositive. Assumption~\textup{(ii)} and
Theorem~\ref{thm-tangentialpositive} imply that $\theta$ is not positive
as a smooth form. In particular, $\theta$ is not a K\"ahler form.
\end{proof}

\section{The compact analytic model}\label{sec:compact-model}

Let $G=\{1,\sigma\}\simeq\mathbf Z/2\mathbf Z$ act on $\Pj^3$ by
\begin{equation}\label{eq:projective-involution}
  \sigma[z_0:z_1:z_2:z_3]
  =[z_0:-z_1:-z_2:-z_3],
\end{equation}
and let
\[
  q:\Pj^3\longrightarrow X:=\Pj^3/G
\]
be the analytic quotient.  
Since $\Pj^3$ is compact, projective and normal, $X$ is a compact normal projective variety by
Subsection \ref{subsec:finite-quotient-normal}.  Moreover, since $q:\Pj^3\to X$ is finite and surjective and $\Pj^3$ is
irreducible, the variety $X$ is irreducible of dimension three;
in particular, it is of pure dimension. Since $X$ is projective,
it is a compact K\"ahler space.
Let
\[
  \widetilde U:=\{z_0\neq0\}\subset\Pj^3,
  \qquad
  p:=q([1:0:0:0])\in X,
  \qquad
  U:=q(\widetilde U).
\]
The quotient map is open: for every open set $V\subset\Pj^3$,
\[
  q^{-1}(q(V))=\bigcup_{g\in G}g(V)
\]
is open, and the quotient topology therefore makes $q(V)$ open.  Hence
$U$ is an open neighborhood of $p$ in $X$.
Using the affine coordinates
\[
  w_i:=\frac{z_i}{z_0},\qquad 1\leq i\leq3,
\]
we identify $\widetilde U$ with $\C^3$.  The involution becomes
\begin{equation}\label{eq:affine-involution}
  w=(w_1,w_2,w_3)\longmapsto-w=(-w_1,-w_2,-w_3).
\end{equation}
Hence $U$ is the analytic quotient $\C^3/\{\pm1\}$, and $p$ is the image
of $0$.

Set
\[
  E:=\operatorname{Sym}_3(\C)
  =\left\{
  \begin{pmatrix}
    a_{11}&a_{12}&a_{13}\\
    a_{12}&a_{22}&a_{23}\\
    a_{13}&a_{23}&a_{33}
  \end{pmatrix}:a_{ij}\in\C
  \right\}.
\]
Thus $E$ is a six-dimensional complex vector space with coordinates
\[
  a_{11},a_{12},a_{13},a_{22},a_{23},a_{33}.
\]
Consider the holomorphic map
\begin{equation}\label{eq:nu-map}
  \nu:\C^3\longrightarrow E,
  \qquad
  w\longmapsto ww^t=(w_iw_j)_{i,j},
\end{equation}
and the closed analytic subset
\begin{equation}\label{eq:def-Y}
  Y:=\{A\in E:\rank A\leq1\}.
\end{equation}
The set $Y$ is analytic because the rank condition is equivalent to the
vanishing of the $2\times2$ minors of $A$.

We equip $E$ with the Hilbert--Schmidt norm
\begin{equation}\label{eq:HS-norm}
  \lVert A\rVert_{\HS}^{2}:=\tr(AA^*).
\end{equation}

\begin{lemma}\label{lem:image-nu}
The image of $\nu$ is $Y$, and its fibers are
\[
  \nu^{-1}(ww^t)=\{w,-w\}\quad(w\neq0),
  \qquad
  \nu^{-1}(0)=\{0\}.
\]
\end{lemma}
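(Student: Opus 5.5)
We have to show two things: the image of $\nu$ is exactly $Y$, and the fibers of $\nu$ are as stated. The plan is to handle the inclusion $\nu(\C^3)\subset Y$ directly, then prove the reverse inclusion by an explicit square root, and finally compute the fibers by comparing entries.

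For $\nu(\C^3)\subset Y$: the matrix $ww^t$ is symmetric, its columns are $w_j w$, and so it has rank at most one. For the reverse inclusion, take $A\in Y$. If $A=0$, then $A=\nu(0)$. If $A\neq0$, then $A$ has rank one, so $A=uv^t$ with $u,v\neq0$. Symmetry gives $uv^t=vu^t$. Comparing column spaces shows $v=cu$ for some $c\in\C^*$, hence $A=c\,uu^t$. Choosing a complex square root $s$ with $s^2=c$ and setting $w=su$ gives $A=ww^t$. An alternative I could mention is the entrywise description: $a_{ij}=w_iw_j$ with $w_i^2=a_{ii}$ and some $a_{ii}\neq0$. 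The factorization argument avoids the case analysis on diagonal entries, so I will use that one.

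For the fibers, first suppose $ww^t=0$. Then every diagonal entry $w_i^2$ vanishes, so $w=0$, which gives $\nu^{-1}(0)=\{0\}$. Next suppose $ww^t=w'w'^t$ with $w\neq0$. Pick an index $i$ with $w_i\neq0$. The $i$-th columns give $w_i w=w'_i w'$, and the $(i,i)$ entries give $w_i^2=w_i'^2$, so $w_i'=\pm w_i\neq0$. Therefore $w'=(w_i/w'_i)\,w=\pm w$. Conversely, $\nu(-w)=\nu(w)$ is immediate. When $w\neq0$ the two points $w$ and $-w$ are distinct, so the fiber is exactly $\{w,-w\}$.

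Nothing here is a real obstacle. The only point that needs care is the square-root step in the surjectivity argument, which uses that $\C$ is algebraically closed; over $\mathbb R$ this step would fail for indefinite signs.
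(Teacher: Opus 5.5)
Your proposal is correct and follows essentially the same route as the paper: factor $A=uv^t$, use symmetry to get $v=cu$ (you compare column spaces, the paper compares entries $u_iv_j=u_jv_i$), then take $w=\sqrt{c}\,u$. Your fiber computation supplies the entrywise argument the paper dismisses as trivial.
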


\begin{proof}
Clearly $ww^t$ has rank at most one.  Conversely, let $A\neq0$ be a
symmetric matrix of rank one.  We can write $A=uv^t$ with $u,v\neq0$.
The symmetry of $A$ means
\[
  u_i v_j=u_j v_i\qquad\text{for all }i,j.
\]
Choosing an index $k$ with $u_k\neq0$ shows that
$v_j=(v_k/u_k)u_j$ for every $j$.  Thus $v=cu$ for some $c\neq0$.
Choose a square root $\sqrt c$ in $\C$ and put $w=\sqrt c\,u$; then
$A=ww^t$.  This proves $\nu(\C^3)=Y$. The assertion concerning the fibers of $\nu$ is trivial.
%If $uu^t=ww^t\neq0$, choose $k$ with $w_k\neq0$.  Then
%$u_k^2=w_k^2$, so $u_k=\varepsilon w_k$ for some
%$\varepsilon\in\{\pm1\}$.  From $u_ju_k=w_jw_k$ we obtain
%$u_j=\varepsilon w_j$ for all $j$.  Thus $u=\pm w$.  Finally,
%$ww^t=0$ forces $w_i^2=0$ for every $i$, hence $w=0$.
\end{proof}

The map $\nu$ is invariant under $w\mapsto-w$, so the defining property of
the analytic quotient gives a holomorphic map
\begin{equation}\label{eq:induced-nu}
  \bar\nu:\C^3/\{\pm1\}\longrightarrow Y
\end{equation}
%Thus by Osgood's theorem (\cite[Chapter 5]{Narasimhan-book}), the map $\bar \nu$ is biholomorphic. We also see from this that $\dim Y=3$.

The next proposition proves directly, using holomorphic power series, that
this is an isomorphism of complex spaces.

\begin{proposition}\label{prop:analytic-local-isomorphism}
The map $\bar\nu$ in \eqref{eq:induced-nu} is biholomorphic.  Consequently,
\begin{equation}\label{eq:local-isomorphism}
  U\simeq Y,
  \qquad
  (X,p)\simeq(Y,0)
\end{equation}
as complex spaces and as analytic germs, respectively.
\end{proposition}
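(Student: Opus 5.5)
To show that $\bar\nu:\C^3/\{\pm1\}\to Y$ is biholomorphic, I will check three things: it is a homeomorphism, it is holomorphic, and its inverse is holomorphic. The inverse is the delicate part.

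\emph{Bijectivity and homeomorphism.} By Lemma~\ref{lem:image-nu}, $\nu$ maps $\C^3$ onto $Y$, and its fibers are exactly the $\{\pm1\}$-orbits. Hence $\bar\nu$ is a continuous bijection. The map $\nu$ is proper, since $\lVert\nu(w)\rVert_{\HS}=\lVert w\rVert^2$. Therefore $\bar\nu$ is a proper continuous bijection between locally compact Hausdorff spaces, and so it is a homeomorphism.

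\emph{Holomorphy of $\bar\nu$.} This follows directly from the defining property \eqref{eq:analytic-quotient-sheaf} of the quotient. Each coordinate $a_{ij}\circ\nu=w_iw_j$ is an even polynomial, so it is a $\{\pm1\}$-invariant holomorphic function and descends to the quotient.

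\emph{Holomorphy of the inverse.} It suffices to show that pullback by $\bar\nu$ induces an isomorphism of structure sheaves. Concretely, every holomorphic function on an open set $q(V)\subset\C^3/\{\pm1\}$ must be of the form $h\circ\bar\nu$ with $h$ holomorphic on the corresponding open subset of $Y$. Such a function is a $\{\pm1\}$-invariant, i.e.\ even, holomorphic function $f$ on a symmetric open set $V\subset\C^3$.

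I would first work near the origin using power series. If $f$ is even and holomorphic on a ball, its Taylor series contains only monomials of even total degree. Each such monomial can be written as a product of the quadratic monomials $w_iw_j$. Fix a rule choosing one such factorization for each monomial, for instance pairing the variables in lexicographic order. This produces a power series $h$ in the six variables $a_{ij}$ with $h\circ\nu=f$.

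The key estimate is convergence. If $\sum c_\alpha w^\alpha$ converges for $\lVert w\rVert<r$, then $|c_\alpha|\le M r'^{-|\alpha|}$ for any $r'<r$. The monomial in $a$ corresponding to $w^\alpha$ has degree $|\alpha|/2$, and the number of even-degree monomials of degree $2k$ grows only polynomially in $k$. Hence $h$ converges on a polydisc $|a_{ij}|<r'^2/C$. This shows that $f=h|_Y\circ\nu$ near $0$, i.e.\ $f$ is the pullback of a function holomorphic on $Y$ near $0$.

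Away from the origin, $\C^3\setminus\{0\}\to Y\setminus\{0\}$ is an unramified double cover. The map $Y\setminus\{0\}$ is smooth there: the Veronese-type map has injective differential modulo the sign. So $\bar\nu$ is a local biholomorphism of manifolds on these sets, and the local inverse is automatically holomorphic.

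Gluing the local statements, the inverse of $\bar\nu$ is holomorphic. Restricting to the chart $U\simeq\C^3/\{\pm1\}$ then gives \eqref{eq:local-isomorphism}.

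\emph{Main obstacle.} I expect the hardest step to be the convergence and well-definedness of $h$ near $0$. The representation $h$ is not unique, because $Y$ is cut out by the $2\times2$ minors. This non-uniqueness is harmless: any two choices of $h$ agree on $Y$, which is all that holomorphy on the reduced space $Y$ requires.
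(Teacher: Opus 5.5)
Your proof is correct and is essentially the paper's argument at the vertex: you rewrite the even Taylor series of an invariant germ in the quadratic monomials $w_iw_j$, check convergence, and observe that the non-uniqueness of the lift does not matter because $Y$ is reduced. One small correction: the Cauchy bound should read $|c_\alpha|\le M\rho^{-|\alpha|}$ for some $\rho>0$, for instance any $\rho<r/\sqrt3$ if $r$ is the radius of a Euclidean ball, but this does not affect the argument.

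The remaining differences are peripheral. You get the homeomorphism from properness of $\nu$ and treat $Y\setminus\{0\}$ with the immersion theorem. The paper instead writes the inverse explicitly on $\{a_{kk}\neq0\}$ as $w_k=\sqrt{a_{kk}}$, $w_j=a_{jk}/w_k$, which shows directly that it is a restriction of ambient holomorphic functions.
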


We see from this that $\dim Y=3$.

\begin{proof}
Lemma~\ref{lem:image-nu} shows first that $\bar\nu$ is bijective.  We check
the complex structures, paying particular attention to the origin.

Let $f$ be a germ at $0\in\C^3$ which is invariant under $w\mapsto-w$.
Write its convergent Taylor series as
\[
  f(w)=\sum_{\alpha\in\mathbf N^3}c_\alpha w^\alpha.
\]
The identity $f(-w)=f(w)$ implies
\[
  c_\alpha=0\qquad\text{whenever }|\alpha|\text{ is odd}.
\]
Every monomial $w^\alpha$ of even total degree can be written as a product
of quadratic monomials $w_iw_j$: simply list the $|\alpha|$ factors and
pair them.  After choosing one such pairing for each $\alpha$, write
\[
  w^\alpha=\prod_{i\leq j}(w_iw_j)^{\beta_{ij}(\alpha)},
  \qquad
  \sum_{i\leq j}\beta_{ij}(\alpha)=\frac{|\alpha|}{2}.
\]
Consider the power series in six variables
\begin{equation}\label{eq:invariant-series}
  F(a):=
  \sum_{|\alpha|\ {\rm even}}
  c_\alpha\prod_{i\leq j}a_{ij}^{\beta_{ij}(\alpha)}.
\end{equation}
This series is genuinely convergent near $0$.  Indeed, choose $r>0$ such
that the Taylor series of $f$ converges absolutely for $|w_i|\leq r$.
If $|a_{ij}|<r^2$ for all $i,j$, then
\[
  \left|
  \prod_{i\leq j}a_{ij}^{\beta_{ij}(\alpha)}
  \right|
  \leq r^{|\alpha|},
\]
so absolute convergence of the Taylor series for $f$ implies absolute
convergence of \eqref{eq:invariant-series}.  By construction,
\[
  f(w)=F\bigl((w_iw_j)_{i,j}\bigr)=F(\nu(w)).
\]
We have therefore proved that every invariant holomorphic germ at $0$ is
the pullback by $\nu$ of a holomorphic germ on $E$.

Conversely, two holomorphic germs on $E$ have the same pullback by $\nu$
exactly when their difference vanishes on $\nu(\C^3)=Y$.  This statement
also holds at the level of germs: the identity
\[
  \lVert\nu(w)\rVert_{\HS}=\lVert w\rVert^2
\]
shows that $\nu$ maps a sufficiently small ball around $0$ onto a
neighborhood of $0$ in $Y$.  It follows from
the definition of the structure sheaf of an analytic subspace that
\begin{equation}\label{eq:germs-identification}
  \mathcal O_{Y,0}
  \simeq
  \mathcal O_{\C^3,0}^{\{\pm1\}}
  =\mathcal O_{\C^3/\{\pm1\},[0]}.
\end{equation}
Thus $\bar\nu$ is biholomorphic at the origin.

For completeness, let $A=ww^t\neq0$ and choose $k$ with $w_k\neq0$.
Then $a_{kk}\neq0$.  On the part of $Y$ where $a_{kk}\neq0$, every entry
is recovered from the three entries in the $k$-th column by
\begin{equation}\label{eq:rank-one-chart}
  a_{ij}=\frac{a_{ik}a_{jk}}{a_{kk}}.
\end{equation}
After choosing one of the two local holomorphic square roots of $a_{kk}$,
we recover
\[
  w_k=\sqrt{a_{kk}},
  \qquad
  w_j=\frac{a_{jk}}{w_k}.
\]
Changing the square root changes $w$ to $-w$, which is the same quotient
point.  Hence $\bar\nu$ also has a holomorphic inverse near every nonzero
point.  This proves that it is biholomorphic everywhere and establishes
\eqref{eq:local-isomorphism}.
\end{proof}

We determine now the  tangent space $T_0Y$ of $Y$ at 0. Recall that
\begin{equation}\label{eq:analytic-tangent-definition}
  T_0Y
  :=\{\xi\in E:dH_0(\xi)=0
       \text{ for every holomorphic germ $H$ vanishing on $Y$}\}.
\end{equation}
Let $H$ be such a germ.  For every $w\in\C^3$, the curve
\[
  \gamma_w(t):=tww^t
\]
lies in $Y$.  Hence $H(\gamma_w(t))=0$, and differentiation at $t=0$
gives
\begin{equation}\label{eq:dH-rank-one}
  dH_0(ww^t)=0.
\end{equation}
The rank-one symmetric matrices span all of $E$: the diagonal matrices are
$e_i e_i^t$, and for $i\neq j$,
\begin{equation}\label{eq:rank-one-span}
  e_i e_j^t+e_j e_i^t
  =(e_i+e_j)(e_i+e_j)^t-e_i e_i^t-e_j e_j^t.
\end{equation}
It follows from \eqref{eq:dH-rank-one} that $dH_0$ vanishes on all of $E$.
Since this is true for every holomorphic equation $H$, definition
\eqref{eq:analytic-tangent-definition} yields
\begin{equation}\label{eq:analytic-tangent}
  T_0Y=E,
  \qquad
  \dim_{\C}T_0Y=6.
\end{equation}
Geometrically, $Y$ is a cone and its tangent cone at the vertex is
\[
  C_0(Y,0)=Y,
\]
which has dimension three. Furthermore, we see that  $ C_0(Y,0) \backslash \{0\}$ is a connected submanifold and  its linear span is all of the
six-dimensional space $E$, by \eqref{eq:rank-one-span}.  This gap between
the nonlinear tangent cone and its linear span is the essential feature
used in our example.

\begin{lemma}\label{lem:regular-locus}
The regular locus of $Y$ is $Y_{\reg}=Y\setminus\{0\}$.  If $w\neq0$ and
$A=ww^t$, then
\begin{equation}\label{eq:tangent-space-Y}
  T_A Y
  =\{wv^t+vw^t:v\in\C^3\}.
\end{equation}
In particular, every $\xi\in T_A Y$ has rank at most two.
\end{lemma}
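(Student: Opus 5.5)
The plan is to use the parametrization $\nu$ from \eqref{eq:nu-map} together with Proposition~\ref{prop:analytic-local-isomorphism}, which identifies $\C^3/\{\pm1\}$ with $Y$.

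\emph{Step 1: regularity away from the origin.} Away from $0$, the map $\bar\nu$ is biholomorphic. The involution $w\mapsto -w$ acts freely on $\C^3\setminus\{0\}$, so $q:\C^3\setminus\{0\}\to(\C^3\setminus\{0\})/\{\pm1\}$ is a local biholomorphism. Hence $Y\setminus\{0\}$ is a complex manifold of dimension three. Concretely, the chart \eqref{eq:rank-one-chart} on $\{a_{kk}\neq0\}$ exhibits $Y$ as a graph over the three coordinates $(a_{1k},a_{2k},a_{3k})$, which confirms smoothness directly.

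\emph{Step 2: the origin is singular.} This is immediate from \eqref{eq:analytic-tangent}. We have $\dim_\C T_0Y=6>3=\dim Y$, so \eqref{eq:tangent-space-dimension-jump} shows that $0\notin Y_{\reg}$.

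\emph{Step 3: the tangent space at a nonzero point.} Fix $w\neq0$ and $A=ww^t$. Since $\nu$ restricted to a small ball around $w$ is a local biholomorphism onto a neighborhood of $A$ in $Y_{\reg}$ (by Step 1), we get $T_AY=d\nu_w(\C^3)$. Differentiating $t\mapsto (w+tv)(w+tv)^t$ at $t=0$ gives $d\nu_w(v)=wv^t+vw^t$, which is \eqref{eq:tangent-space-Y}. As a sanity check, $v\mapsto wv^t+vw^t$ is injective for $w\neq0$: if $wv^t=-vw^t$, then applying both sides to a vector $u$ with $w^tu\neq0$ forces $v$ to be parallel to $w$, say $v=cw$, and then $2c\,ww^t=0$ gives $c=0$. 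So the image is three-dimensional, as it must be.

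\emph{Step 4: the rank bound.} For $\xi=wv^t+vw^t$ the image of $\xi$ is contained in $\operatorname{span}\{w,v\}$, so $\rank\xi\le2$.

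The only mildly delicate point is Step 1: we need a genuine local biholomorphism near $w$, not just a bijection. This follows from the explicit holomorphic inverse constructed in the proof of Proposition~\ref{prop:analytic-local-isomorphism}, composed with a local section of $q$ given by choosing the branch of $\sqrt{a_{kk}}$.
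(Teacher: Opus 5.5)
Your proposal is correct and follows essentially the same route as the paper: regularity of $Y\setminus\{0\}$ from the free $\{\pm1\}$-action together with the biholomorphism $\bar\nu$, singularity of the vertex from $\dim_{\C}T_0Y=6>3$, and the tangent space from $d\nu_w(v)=wv^t+vw^t$. The only cosmetic difference is that the paper gets $T_AY=d\nu_w(\C^3)$ from injectivity of $d\nu_w$ plus a dimension count. You instead read it off directly from $\nu$ being a local biholomorphism near $w$, which makes your injectivity check redundant but harmless.
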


\begin{proof}
The action of $\{\pm1\}$ on $\C^3\setminus\{0\}$ is free.  For every
$w\neq0$, one can choose a sufficiently small neighborhood $V$ of $w$
such that $V\cap(-V)=\varnothing$.  The quotient map restricts to a
biholomorphism from $V$ onto its image.  Since $Y$ is biholomorphic to
$\C^3/\{\pm1\}$, it follows that every point
of $Y\setminus\{0\}$ is smooth.  On the other hand, $0$ is singular:
the local dimension of $Y$ is three, whereas
$\dim_{\C}T_0Y=6$ by \eqref{eq:analytic-tangent}.  This proves the assertion
about the regular locus.

Differentiating the parametrization $\nu(w)=ww^t$ gives
\begin{equation}\label{eq:differential-nu}
  d\nu_w(v)=wv^t+vw^t.
\end{equation}
For $w\neq0$ this differential is injective.  Indeed, choose an index $k$
with $w_k\neq0$.  If $wv^t+vw^t=0$, its $(k,k)$-entry gives
$2w_kv_k=0$, hence $v_k=0$; its $(j,k)$-entry then gives
$v_jw_k=0$ for every $j$, hence $v=0$.  Both $\C^3$ and $T_A Y$ have
dimension three, so \eqref{eq:differential-nu} identifies $\C^3$ with
$T_A Y$ and proves \eqref{eq:tangent-space-Y}.  Finally, each summand
$wv^t$ and $vw^t$ has rank at most one, and therefore
  $\rank(wv^t+vw^t)\leq2$.
\end{proof}

The preceding tangent-space formula also makes the three cones from
Subsection~\ref{subsec:tangent-cones} completely explicit in this example.

\begin{lemma}[The fourth Whitney cone of $Y$]
\label{lem:C4-of-Y}
At the vertex one has
\[
  C_0(Y,0)=Y=\{A\in E:\rank A\leq1\},
  \qquad
  C_4(Y,0)=\{A\in E:\rank A\leq2\}.
\]
Consequently,
\[
  \dim_{\C}C_0(Y,0)=3,
  \qquad
  \dim_{\C}C_4(Y,0)=5,
  \qquad
  \dim_{\C}T_0Y=6.
\]
\end{lemma}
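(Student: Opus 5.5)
We need $C_0(Y,0)=Y$ and $C_4(Y,0)=\{\rank\le 2\}$, together with the dimension counts.

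\textbf{The ordinary tangent cone.} $Y$ is defined by the $2\times 2$ minors, which are homogeneous quadrics, so $Y$ is invariant under complex scalings. Take $v\in Y$ and put $x_\nu:=v/\nu\in Y$. Then $x_\nu\to0$ and $\nu x_\nu=v$, so $v\in C_0(Y,0)$. Conversely, if $t_\nu x_\nu\to v$ with $x_\nu\in Y$ and $t_\nu>0$, then $t_\nu x_\nu\in Y$ by homogeneity. Since $Y$ is closed, $v\in Y$. Hence $C_0(Y,0)=Y$, and $\dim Y=3$ follows from Proposition \ref{prop:analytic-local-isomorphism}.

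\textbf{The inclusion $C_4(Y,0)\subset\{\rank\le2\}$.} By Lemma \ref{lem:regular-locus}, every $\xi\in T_AY$ with $A\in Y_{\reg}$ has rank at most two. The set $\{\rank\le2\}$ is closed, being the zero set of $\det$. So any limit $v=\lim v_\nu$ with $v_\nu\in T_{x_\nu}Y_{\reg}$ has rank at most two.

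\textbf{The inclusion $\{\rank\le2\}\subset C_4(Y,0)$.} This is the substantive step. Let $B\in E$ be a symmetric matrix of rank at most two. Over $\C$, every symmetric matrix of rank $\le 2$ can be written as $B=uu^t+u'u'^t$ by Takagi/Autonne factorization, or simply by diagonalizing the symmetric bilinear form by congruence: $B=P^tDP$ with $D=\operatorname{diag}(1,1,0)$ or of lower rank. Rewrite this as
\[
B=\tfrac12\bigl(ww'^t+w'w^t\bigr),\qquad w=u+iu',\quad w'=u-iu'.
\]
To check this, expand $(u+iu')(u-iu')^t+(u-iu')(u+iu')^t=2uu^t+2u'u'^t$.

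Next put $w_\nu:=w/\nu$ if $w\neq0$. If $w=0$, then $B=0$, which lies in the cone trivially. Then $A_\nu:=w_\nu w_\nu^t\in Y_{\reg}$ and $A_\nu\to0$. By \eqref{eq:tangent-space-Y}, with $v=\nu w'/2$,
\[
w_\nu v^t+v w_\nu^t=\tfrac12\bigl(ww'^t+w'w^t\bigr)=B\in T_{A_\nu}Y.
\]
The constant sequence $v_\nu=B$ therefore shows $B\in C_4(Y,0)$.

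\textbf{Dimensions.} The set $\{\rank\le2\}=\{\det=0\}$ is a hypersurface in the six-dimensional space $E$. Since $\det\not\equiv0$, it has dimension $5$. Finally, $\dim T_0Y=6$ is \eqref{eq:analytic-tangent}.

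The only place needing care is the algebraic decomposition $B=\tfrac12(ww'^t+w'w^t)$ for an arbitrary complex symmetric matrix of rank $\le2$. It follows from the classification of complex symmetric bilinear forms up to congruence: every such form of rank $r$ is congruent to the sum of $r$ squares.
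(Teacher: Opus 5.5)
Your proof is correct and follows the paper's argument. The paper also gets $C_4(Y,0)\subset\{\rank A\le 2\}$ from Lemma~\ref{lem:regular-locus} plus a closedness argument. It proves the converse by writing $A=wv^t+vw^t$ with $w=u+\sqrt{-1}\,u'$ and $v=\tfrac12(u-\sqrt{-1}\,u')$, which is exactly your choice.

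The only differences are cosmetic. The paper takes $u=\sqrt{\sigma_1}Ue_1$ and $u'=\sqrt{\sigma_2}Ue_2$ from the Autonne--Takagi factorization, where you use congruence diagonalization over $\C$. For closedness, the paper shows $\bigcup_{[w]\in\Pj^2}L_w$ is closed via compactness of $\Pj^2$, where you use that $\{\det=0\}$ is closed.
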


\begin{proof}
The equality $C_0(Y,0)=Y$ follows because $Y$ is invariant under complex
dilations.  By Lemma~\ref{lem:regular-locus}, the tangent space at
$ww^t\neq0$ is
\[
  L_w:=\{wv^t+vw^t:v\in\C^3\}.
\]
The space $L_w$ depends only on $[w]\in\Pj^2$.  Since $\Pj^2$ is compact,
the union of these planes is closed, and Definition~\ref{def:fourth-whitney-cone}
gives
\[
  C_4(Y,0)=\bigcup_{[w]\in\Pj^2}L_w.
\]
Every matrix in $L_w$ has rank at most two, so the union is contained in
$\{A:\rank A\leq2\}$.

Conversely, let $A$ be a complex symmetric matrix of rank at most two. By Autonne–Takagi factorization, there exist a unitary matrix $U$ and  
$\sigma_1,\sigma_2\geq0$ such that
\[
  A=U\,\operatorname{diag}(\sigma_1,\sigma_2,0)\,U^t.
\]
Set
\[
  w=U(\sqrt{\sigma_1},\,\sqrt{-1}\sqrt{\sigma_2},\,0)^t,
  \qquad
  v=\frac12U(\sqrt{\sigma_1},\,-\sqrt{-1}\sqrt{\sigma_2},\,0)^t.
\]
A direct multiplication gives $A=wv^t+vw^t$, so $A\in L_w$; the zero
matrix is automatic.  This proves the asserted description of $C_4(Y,0)$.
The rank-at-most-two locus is the determinant hypersurface in the
six-dimensional vector space $E$, and hence has dimension five. 
\end{proof}

To summarize, we obtain the following result.

\begin{corollary}\label{cor:compact-projective-example}
Let $X$ be as above. 
Then \(X\) is a compact normal projective variety, and there exists a
real closed smooth \((1,1)\)-form \(\theta\) on \(X\), having local smooth
potentials, such that \(\theta\) is a K\"ahler current but is not
positive as a smooth form. In particular, \(\theta\) is not a
K\"ahler form.
\end{corollary}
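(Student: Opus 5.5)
The plan is to deduce the corollary directly from Theorem~\ref{thm-tangentialpositive2}, applied to $X=\Pj^3/G$ at the point $x_0=p$. First we collect the global properties of $X$ established at the start of Section~\ref{sec:compact-model}. By Proposition~\ref{prop:finite-quotient-projective} and Cartan's theorem, $X$ is compact, normal and projective. It is irreducible of dimension three, hence pure-dimensional. Being projective, it carries a Kähler form, namely the restriction of a Fubini--Study form under a projective embedding, and so it is a compact pure-dimensional Kähler space. This covers the global hypotheses of Theorem~\ref{thm-tangentialpositive2}.

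Next we transport the local data at $p$. By Proposition~\ref{prop:analytic-local-isomorphism} the germ $(X,p)$ is isomorphic to $(Y,0)$ with $Y\subset E$. The tangent space and $C_4$ are intrinsic: the paper notes this via Theorem~\ref{thm:grauert-local-extension}, and a biholomorphism of germs carries the regular locus and its tangent spaces to each other. Hence we may compute in the embedding $Y\subset E\simeq\C^6$. From \eqref{eq:analytic-tangent} and Lemma~\ref{lem:C4-of-Y} we get $T_pX\simeq E$ of dimension $6$, and $C_4(X,p)\simeq\{A:\rank A\le 2\}$ of dimension $5$. This gives condition (i).

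For condition (ii), observe that $C_4(Y,0)=\{\det A=0\}$. We need an irreducible component of this set that is not contained in any hyperplane. The cleanest route avoids deciding irreducibility of the determinant hypersurface altogether. Note that $Y=\{\rank\le1\}\subset C_4(Y,0)$, and $Y$ is irreducible, being the image of $\C^3$ under $\nu$. So $Y$ lies in some irreducible component $Z$ of $C_4(Y,0)$. By \eqref{eq:rank-one-span} the linear span of $Y$ is all of $E$, so $Z$ cannot lie in a complex hyperplane. (Alternatively, $\det$ on symmetric $3\times3$ matrices is an irreducible cubic, so $C_4$ itself is irreducible.)

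With (i) and (ii) verified, Theorem~\ref{thm-tangentialpositive2} yields a closed real smooth $(1,1)$-form $\theta$ with smooth local potentials which is a Kähler current but not positive as a smooth form, and therefore not Kähler. The only step needing care is the intrinsic transfer of $C_4$ and $T_p$ across the isomorphism $(X,p)\simeq(Y,0)$. I expect this to be routine, since both cones are defined through the differential and the regular locus, and both are preserved by biholomorphisms of germs.
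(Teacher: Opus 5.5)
Your proposal is correct, and its structure matches the paper's proof. You collect the global properties of $X$, transfer $T_pX$ and $C_4(X,p)$ through $(X,p)\simeq(Y,0)$, read off (i) from Lemma~\ref{lem:C4-of-Y}, verify (ii), and invoke Theorem~\ref{thm-tangentialpositive2}.

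The only real difference is how you verify (ii).

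\textbf{The paper's route.} It shows that $D_2=\{A:\rank A\le 2\}$ is itself irreducible. It writes $D_2$ as the image of $\operatorname{Mat}_{3\times 2}(\C)$ under $B\mapsto BB^t$, with surjectivity coming from the Autonne--Takagi factorization. It then uses that $D_2$ contains the rank-one matrices, which span $E$ by \eqref{eq:rank-one-span}.

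\textbf{Your route.} You never decide whether $D_2$ is irreducible. The closed irreducible cone $Y=\nu(\C^3)\subset D_2$ must lie in some irreducible component $Z$ of $D_2$. That $Z$ inherits from $Y$ the property of spanning $E$, so it lies in no complex hyperplane.

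\textbf{Comparison.} Your argument is shorter and needs no matrix factorization. It also delivers exactly what the hypothesis of Theorem~\ref{thm-tangentialpositive2} asks for: one irreducible component not contained in any hyperplane, whose regular locus is then connected as required in the proof of Theorem~\ref{thm-tangentialpositive}. The paper's argument gives the stronger but unneeded fact that $C_4(X,p)$ is irreducible. Your parenthetical remark that the symmetric $3\times 3$ determinant is an irreducible cubic would also give that.
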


\begin{proof} 
We already know that the quotient \(X\)
is a compact normal pure-dimensional projective variety, and  the germ
\((X,p)\) is biholomorphic to the germ \((Y,0)\), where
\[
  Y=\{A\in E:\rank A\leq1\},
  \qquad
  E=\operatorname{Sym}_3(\C).
\]
Since the tangent space and the fourth Whitney cone are preserved by
biholomorphisms of germs, this identification induces
\[
  T_pX\simeq T_0Y=E
\]
and
\[
  C_4(X,p)\simeq C_4(Y,0).
\]
Lemma~\ref{lem:C4-of-Y} gives
\[
  C_4(Y,0)
  =
  D_2
  :=
  \{A\in E:\rank A\leq2\}
\]
and
\[
  \dim_{\C}C_4(Y,0)=5
  <
  6=\dim_{\C}T_0Y.
\]
Consequently,
\[
  \dim_{\C}C_4(X,p)<\dim_{\C}T_pX.
\]

It remains to verify that \(D_2\) is irreducible and is not contained
in any complex hyperplane of \(E\). Consider the polynomial map
\[
  \Phi:
  \operatorname{Mat}_{3\times2}(\C)
  \longrightarrow E,
  \qquad
  B\longmapsto BB^t.
\]
Every matrix in the image of \(\Phi\) is symmetric and has rank at
most two. Conversely, let \(A\in D_2\). By the Autonne--Takagi
factorization, there exist a unitary matrix \(U\) and real numbers
\(\sigma_1,\sigma_2\geq0\) such that
\[
  A
  =
  U\operatorname{diag}(\sigma_1,\sigma_2,0)U^t.
\]
Setting
\[
  B
  :=
  U
  \begin{pmatrix}
    \sqrt{\sigma_1}&0\\
    0&\sqrt{\sigma_2}\\
    0&0
  \end{pmatrix},
\]
we obtain \(A=BB^t\). Hence
\[
  \Phi\bigl(\operatorname{Mat}_{3\times2}(\C)\bigr)=D_2.
\]
The affine space \(\operatorname{Mat}_{3\times2}(\C)\) is
irreducible, and the image of an irreducible space under holomorphic maps is
irreducible. Since \(D_2\) is closed, it follows that \(D_2\) is an
irreducible complex algebraic cone.

Moreover, \(D_2\) contains all rank-one symmetric matrices \(ww^t\).
By \eqref{eq:rank-one-span}, these matrices span the whole vector
space \(E\). Therefore \(D_2\) is not contained in any complex
hyperplane of \(E\).

Thus \(C_4(X,p)\) is irreducible, is not contained in any complex
hyperplane of \(T_pX\), and satisfies
\[
  \dim_{\C}C_4(X,p)<\dim_{\C}T_pX.
\]
All the assumptions of Theorem~\ref{thm-tangentialpositive2} are
therefore satisfied at \(p\). Applying that theorem gives a real
closed smooth \((1,1)\)-form \(\theta\) on \(X\), having local
potentials, which is a K\"ahler current but is not positive as a
smooth form. In particular, \(\theta\) is not a K\"ahler form.
\end{proof}

\bibliographystyle{plainnat}
\bibliography{Positivity-of-smooth-currents}

\Addresses

\end{document}